    \newcommand{\esecat}{{\rm e\text{-}secat}}
     \newcommand{\psecat}{{\rm p\text{-}secat}}
       \newcommand{\im}{\operatorname{{\rm Im}}}
    \newcommand{\secat}{{\rm secat}}
    \newcommand{\pcat}{\operatorname{{\rm p\text{-}cat}}}
     \newcommand{\id}{\operatorname{{\rm id}}}
      \newcommand{\ptc}{\operatorname{{\rm p\text{-}TC}}}
       \newcommand{\etc}{\operatorname{{\rm e\text{-}TC}}}
       \newcommand{\tc}{\operatorname{{\rm TC}}}
      \newcommand{\ecat}{\operatorname{{\rm e\text{-}cat}}}
      \newcommand{\pro}{\operatorname{\bf{P}}}
      \newcommand{\exte}{\operatorname{\bf{E}}}
      \newcommand{\topinf}{\operatorname{{\bf{Top}}^\infty}}
\newcommand{\cat}{\operatorname{{\rm cat}}}
\newcommand{\cale}{{\mathcal E}}
\newcommand{\ex}{\mathcal{E}} 
\newcommand{\T}{\tau}  
\newcommand{\R}{\mathbb{R}}
\newcommand{\nil}{\operatorname{\text{\rm nil}}}
\newtheorem{theorem}{Theorem}[section]
\newtheorem{corollary}[theorem]{Corollary}
 \newtheorem{lemma}[theorem]{Lemma}
 \newtheorem{proposition}[theorem]{Proposition}
 \theoremstyle{definition}
 \newtheorem{definition}[theorem]{Definition}
 \theoremstyle{definition}
 \newtheorem{remark}[theorem]{Remark}
 \numberwithin{equation}{section}
\newcommand{\br}{\mathbb R}
\newcommand{\sm}{{\setminus}}
\newcommand{\PP}{\textbf{P}}
\newcommand{\E}{\textbf{E}}
\newcommand{\extco}{H^*_{\cale}}
\begin{document}

\title{Proper topological complexity}

\author{Jos\'e Manuel Garc{\'\i}a-Calcines and Aniceto Murillo\footnote{The authors have been partially supported by the grant PID2023-149804NB-I00 of the Spanish Government and the Junta de Andaluc{\'\i}a grant ProyExcel-00827.}}

\maketitle

\begin{abstract}
We introduce and study the proper topological complexity of a given configuration space, a version of the classical invariant for which we require that the algorithm controlling the motion is able to avoid any possible choice of ``unsafe'' area. To make it a homotopy functorial invariant we characterize  it as a particular instance of the exterior sectional category of an exterior map, an invariant of the exterior homotopy category which is also deeply analyzed.
\end{abstract}


\section*{Introduction}
In broad terms, the topological complexity $\tc(X)$ of a motion  on a given configuration space $X$, as originally introduced in \cite{far}, provides an  lower bound of the number of ``continuous instructions'' of any algorithm controlling the given motion. One can also regard this invariant as the minimum number of navigational instabilities of the given motion.

However, to improve the efficiency of a given motion planner, one often need to impose extra conditions. Accordingly, different versions of topological complexity, sharpening the original one,  have appeared in the last years. For instance, for the {\em symmetric topological complexity} \cite{fargrant}, the motion from a point of the configuration space to itself is required to be the constant path and the trajectories given by the motion between two given points are the same regardless of which point is considered initial or final. Interesting  generalizations of this concept are described by different version of {\em equivariant topological complexity} \cite{colgrant,dra,lumar}. Another illustrative example is given by the {\em  efficient topological complexity} \cite{blaszca} where the path given by the motion between two given points of the configuration space is required to have the shortest possible length.

Here we impose that the algorithm controlling the motion  avoids any possible choice of ``unsafe environmental conditions'' of the given configuration space. As paradigmatic example, let $X$ be the configuration space on which we want to develop a motion planner and let $f\colon X\to Y$ be a continuous map describing certain environmental parameters of $X$. For instance, if $X$ is the configuration space of a navigational robot we could take $Y$ to be $\br^n$ and $f$ supplying the atmospheric pressure, humidity, temperature,..at each point of $X$.  Another case is providing by choosing $X$ to be the configuration space of an articulated arm and $f$ to be the associated work map.

Consider then $f^{-1}({\mathcal A})$, with ${\mathcal A}\subseteq Y$ an ``unsafe area'', given by the region of the configuration spaces in which $f$ takes values in  $\mathcal A$, thought of as a set of extremal conditions to be avoided.
For  safety purposes it is reasonable to ask our planner that for all $x,y\notin f^{-1}({\mathcal A})$ the whole path $\sigma(x,y)$ describing the trajectory from $x$ to $y$  is not exposed to these extremal conditions. Moreover, for practical purposes, we may assume that the unsafe area is compact and,  for efficiency purposes this time, the algorithm controlling the motion should operate independently of the chosen unsafe area.

Thus, the topological complexity of a configuration space measuring the instabilities of a planner which is ready to avoid any given unsafe area is formally translated to:

\begin{definition}\label{defmain}
 Let $X$ be a topological space. The {\em proper topological complexity} of $X$, denoted by $\ptc(X)$, is the minimum integer $n$ (or $\infty$ if no such integer exists) such that $X\times X$ is covered by $n+1$ open sets $\{U_0,\dots,U_n\}$ for which there are continuous local sections $\sigma_i\colon U_i\to X^I$ of the path fibration $q\colon X^I\to X\times X$ satisfying the following additional condition:

Every compact subspace $K$ of $X$ is contained in another compact $L$  such that $ \im\sigma_i(x,y)\subseteq K^c$ for all $x,y\in L^c$.
\end{definition}

The proper topological complexity is closely related to the following general problem in robotics: assume the configuration space of a given motion planning is a finite cell complex $Y$, and one requires the motion between two points in a certain subcomplex $Z$ of $Y$ to remain entirely in $Z$. In other words, $Z$ and $Y{\setminus} Z$ are considered the uniquely defined safe and unsafe regions, respectively. In this context, provided that $Y{\setminus}Z\hookrightarrow Y$ is a homotopy equivalence, the topological complexity of $Y$, subject to the mentioned restriction, is precisely $\ptc(Y{\setminus}Z)$ (see \S\ref{finale}).

\smallskip

The proper topological complexity, which clearly coincides with $\tc(X)$ whenever $X$ is compact, is, remarkably, an invariant of the proper homotopy type of $X$, despite the fact that the path fibration $q$ is not a proper map. To establish this and to extract the main properties of $\ptc$, we characterize it as a homotopy functorial invariant within the exterior homotopy category (see \S1 for a brief overview of exterior homotopy theory).  Namely,  and taking into account that the proper category is fully and faithfully embedded in the exterior category, we  set:

 \begin{definition}\label{secatmain}
 Given an exterior map $f\colon X\to Y$, the {\em exterior sectional category of $f$}, $\esecat(f)$ is the least integer $n$ (or $\infty$) for which there exists an exterior open covering $\{U_i\}_{i=0}^n$ of $Y$ and diagrams
 $$\xymatrix{
{U_i} \ar@{^{(}->}@<-2pt>[rr]\ar[dr]_{r_i}& & {Y} \\
 & {X} \ar[ur]_f }
 $$
 commuting up to exterior homotopy. As the paths $X^I$ of an exterior space $X$ is naturally endowed with an externology for which the path fibration $q\colon X^I\to X\times X$ is an exterior map, define the {\em exterior topological complexity of $X$} as
 $$
\etc(X)=\esecat(q).
$$
\end{definition}
The exterior sectional category and its proper counterpart are deeply analized in \S2. We find specially interesting the subaditivity of this invariant and its consequences when particularizing to the proper category.

On the other hand, the exterior topological complexity, together with its proper version, which constitutes the core of this work, is studied in \S3. In particular, we prove (see Proposition \ref{princi}):

\begin{theorem}
$$
 \ptc(X)=\etc(X_{cc})=\esecat(q\colon X_{cc}^I\to X_{cc}\times X_{cc})
 $$
 where $X_{cc}$ denotes the space $X$ endowed with the cocompact externology. As in the exterior homotopy category $q$ can be replaced by the diagonal we also have
 $$
 \ptc(X)=\esecat(\Delta\colon X_{cc}\to X_{cc}\times X_{cc}).
 $$
\end{theorem}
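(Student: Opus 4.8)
The strategy is to unravel all three invariants into their covering-plus-section definitions and show they match term by term. First I would establish the identity $\ptc(X)=\etc(X_{cc})$. By Definition~\ref{secatmain}, $\etc(X_{cc})=\esecat(q\colon X_{cc}^I\to X_{cc}\times X_{cc})$ is computed using \emph{exterior} open coverings of $X_{cc}^I$ and exterior homotopy sections. The key observation is that for the cocompact externology on $X$, the induced externology on $X^I$ and on $X\times X$ has, as a cofinal family of ``exterior-open'' complements, exactly the preimages of cocompact sets; so an exterior open covering $\{U_i\}$ of $X_{cc}\times X_{cc}$ together with exterior local sections $\sigma_i$ translates precisely into the data of Definition~\ref{defmain}: the requirement that $\sigma_i$ be an exterior map unwinds to ``for every cocompact $K^c$ there is a cocompact $L^c$ with $\sigma_i(L^c)\subset (K^c)^I$,'' which is the clause ``$\operatorname{Im}\sigma_i(x,y)\subset K^c$ for all $x,y\in L^c$.'' I would spell out that an ordinary open set $U\subset X\times X$ together with the trivial externology data is automatically exterior-open in $X_{cc}\times X_{cc}$, so no openness is lost, and conversely an exterior-open set is in particular open. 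Then the minimal $n$ on both sides coincide.

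Second, the middle equality $\etc(X_{cc})=\esecat(q\colon X_{cc}^I\to X_{cc}\times X_{cc})$ is literally Definition~\ref{secatmain} applied to the exterior space $X_{cc}$, so it requires only checking that the path-space externology referred to there agrees with the cocompact one on $X^I$ when $X$ carries the cocompact externology. This should be a short verification: a basis for the cocompact externology on $X^I$ is given by sets of paths lying entirely in cocompact subsets of $X$, which is exactly the natural externology on the path space of an exterior space as described in \S1.

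Third, to replace $q$ by the diagonal $\Delta\colon X_{cc}\to X_{cc}\times X_{cc}$, I would invoke the general principle — presumably established in \S2 as a property of $\esecat$ — that exterior sectional category is an invariant of the exterior homotopy class of the map, together with the fact that $q$ and $\Delta$ fit into a commutative triangle with the exterior homotopy equivalence $X_{cc}\xrightarrow{\simeq} X_{cc}^I$ given by constant paths (the inclusion of constant paths is an exterior map, its homotopy inverse is evaluation at $0$, and the straight-line contraction $\sigma\mapsto\sigma_t$ is an exterior homotopy since it never leaves the cocompact sets that $\sigma$ already lies in). Since $q$ composed with the constant-path inclusion is $\Delta$, homotopy invariance of $\esecat$ gives $\esecat(q)=\esecat(\Delta)$.

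The main obstacle is the first step: verifying carefully that the externology on $X_{cc}^I$ and $X_{cc}\times X_{cc}$ is such that exterior-openness imposes \emph{no} extra constraint beyond ordinary openness (so that the cover $\{U_i\}$ is unrestricted), while the exterior-map condition on the sections $\sigma_i$ reproduces \emph{exactly} the compact-avoidance clause of Definition~\ref{defmain} — neither weaker nor stronger. This hinges on correctly identifying a cofinal system of externology members for the cocompact externology on products and path spaces, and on the fact that every open cover can be refined to an exterior-open one in this externology; I would treat this as the technical heart of the argument and the rest as formal consequences of results from \S1 and \S2.
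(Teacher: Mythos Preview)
Your core approach matches the paper's: the heart of the argument is exactly the equivalence you state in step~1, that a continuous local section $\sigma\colon U\to X^I$ of $q$ is an exterior map (for the cocompact externology on $X$) if and only if it satisfies the compact-avoidance clause of Definition~\ref{defmain}. The paper's proof of Proposition~\ref{princi} is precisely this check, and your step~3 replacing $q$ by $\Delta$ via the constant-path equivalence and homotopy invariance of $\esecat$ is exactly Remark~\ref{nuevo}(1) together with Proposition~\ref{propo2}.

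Two of your side claims, however, are false and should simply be dropped rather than proved. First, the covering sets in the definition of $\esecat$ are ordinary open sets, \emph{not} required to be exterior-open (see the definition in \S2: ``an open subset $U\subseteq Y$, non necessarily exterior''); so the ``main obstacle'' you flag is not there. Your proposed resolution---that exterior-openness imposes no extra constraint, or that any open cover refines to an exterior-open one---is actually false in $X_{cc}\times X_{cc}$: a bounded open set in $\br^n\times\br^n$ is not exterior, and a cover by bounded sets admits no exterior-open refinement. Second, the path-space externology on $(X_{cc})^I$ is \emph{not} the cocompact externology on $X^I$: its basic exterior sets are the sets $(I,K^c)=\{\alpha:\alpha(I)\subset K^c\}$ for $K\subset X$ compact, and the complement $\{\alpha:\alpha(I)\cap K\neq\emptyset\}$ is typically non-compact in $X^I$. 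Fortunately neither claim is needed: once you use ordinary open covers and the correct path-space externology (which you already use implicitly in step~1 when you write $\sigma_i(L^c)\subset(K^c)^I$), the argument goes through exactly as you outline and coincides with the paper's.
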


At this point we can  obtain the main properties and bounds of this proper invariant summarized in Theorems \ref{corotcp} and  \ref{noncompact}, as well as an illustrative collection of examples in \S4.

For all of the above we have considered useful to briefly include in \S1 the basics of exterior homotopy theory and its numerical invariants of Lusternik-Schnirelmann type we will need.

\section{Preliminaries}

The following constitutes standard facts on  exterior homotopy theory for which we refer to \cite{calpiher} or the summaries   in \cite[\S2]{gargarmu2} or \cite[\S1]{gargarmu4}.

\medskip

 Denote by
$\mathbf{P}$ the category of spaces and proper maps (which are always assumed to be continuous) and write $\mathbb{R}_+=[0,\infty)$. A {\em ray} of $X$ is a proper map $\br_+\to X$, which plays in $\PP$ the role of choosing a base point. Let $\PP_\infty\subset \PP$ denote the full subcategory of $\PP$ consisting of Hausdorff, non compact, $\sigma$-compact and locally compact spaces. For any $X\in\PP_\infty$  there is a proper map $r\colon X\to \br_+$ which is unique up
to proper homotopy. A {\em strong end} of a given $X\in\PP$ is a proper homotopy class of a ray.

\medskip

 An \textit{exterior space}
$(X,\ex)$ consists of a topological space $(X,\T)$
together with a non empty family of {\em exterior sets} $\ex\subseteq \T $, called
\textit{externology}, which  is closed by finite intersections and,
whenever $U \supseteq E$, $E \in \ex$, $U\in \T$, then $U\in \ex$.  Note that, since $\ex$ is non empty,  $X$ is always an exterior set. The {\em total externology} is  $\ex= \T $ and the {\em trivial externology} is $\ex =\{X\}$. In what follows we denote by $X_{tot}$ and $X_{tr}$ the space $X$ endowed, respectively, with the total and trivial externology. Note that a given externology is the total one if and only if the empty set is exterior.

 An externology on a space can be thought of as  a  ``neighborhood basis at infinity''. A continuous map $f\colon (X,\ex ) \rightarrow (X',\ex'
)$ is  \textit{exterior} if  $f^{-1}(E) \in \ex$, for all $E \in \ex'$. The {\em relative externology} in a subspace $Y\subseteq X$ of an exterior space is defined as the coarsest externology on $Y$ that makes the inclusion map exterior. We denote by $\E$ the category
 of exterior spaces and maps which is complete and cocomplete \cite[Theorem 3.3]{calpiher}.

Any topological space $X$  can be endowed with the
\textit{cocompact externology}  formed by the
family of the complements of all closed compact subspaces. The
corresponding exterior space will be denoted by $X_{cc}$. This defines a full embedding
\begin{equation}\label{fullembed} \PP \hookrightarrow \E,\qquad X\mapsto X_{cc}.\end{equation}

The product $X\times Y$ of two exterior spaces is always considered with the product externology given by the open sets of the product which contain a product  of exterior open sets of $X$ and $Y$.

In particular, for any exterior space $X$ consider the exterior product $X\times I_{tr}$ where $I=[0,1]$. This is the {\em exterior cylinder} of $X$ and provides the right notion of exterior homotopy  between two exterior maps which will be denoted by $\simeq_e$. Whenever a given space $X$ is endowed with the cocompact externology one has
\begin{equation}\label{cilindro}
X_{cc}\times I_{tr}=(X\times I)_{cc}
\end{equation}
 and thus it produces the genuine cylinder in $\PP$. Note also that, for any space $X$,
 \begin{equation}\label{totaltrivial}
 X_{tot}\times I_{tr}=(X\times I)_{tot}
 \end{equation}

On the other hand, for any exterior space $X$, the path space $X^I$ is endowed with the externology formed by the open sets containing some $(I,E):=\{\alpha \in X^I,\,\alpha (I)\subseteq E\}$ with $E\subseteq X$ exterior.

A ray of an exterior space $X$ is an exterior map ${\br_+}\to X$ in which $\br_+$ is considered with the cocompact externology. We denote by $\exte_0$ the category  of exterior spaces $X$ admitting a ray and for which there is an exterior map $r\colon X\to\br_+$ which is then necessarily unique up to exterior homotopy.

Note that, for any $X\in\exte_0$, the projection
$p\colon X\times\br_+\stackrel{\simeq_e}{\longrightarrow} X$ is an exterior homotopy equivalence.  Indeed, consider the exterior map
$(\id_X,r)\colon X\rightarrow X\times \mathbb{R}_+$ where $r\colon X\to\br_+$ is a given exterior map. Then $p\circ
(\id_X,r)=\id_X$ and  $(\id_X,r)\circ p\simeq_e\id_{X\times \mathbb{R}_+}$  through the usual homotopy which happens to be also exterior
$F\colon (X\times \mathbb{R}_+)\times I_{tr}\rightarrow X\times
\mathbb{R}_+$, $F\bigl((x,t),s\bigr)=(x,(1-s)r(x)+st)$.

Finally, for readers who are less familiar with exterior spaces, we outline the following, which may be helpful for understanding the arguments in the subsequent sections.  Let $\topinf$ be the subcategory of the category of pointed spaces $(X,x_0)$ in which the base point is a closed set and maps $f\colon (X,x_0)\to (Y,y_0)$ for which $f^{-1}(\{y_0\})=\{x_0\}$. Then, see \cite[Proposition 6]{cal}, there is an equivalence of categories
$$
(-)^\infty \colon \exte\stackrel{\cong}{\longrightarrow}\topinf
$$
which assigns to each exterior space $(X,\tau,\ex)$ the pointed space $X^\infty=(X\sqcup\{\infty\},\infty)$ equipped with the topology $\tau\cup\{E\sqcup\{\infty\},\,E\in\ex\}$. For any exterior map $f$, $f^\infty$ is obviously defined. Being an equivalence, it preserves small limits and colimits. In particular
$$
(X\times Y)^\infty=X^\infty\wedge Y^\infty
$$
as the smash is the categorical product in $\topinf$.
Moreover, it can be shown that, for any exterior space $X$, its exterior cylinder $X\times I_{tr}$ is sent (up to isomorphism in $\topinf$) to $X^\infty\times I/\{\infty\}\times I$ through this equivalence.

 Note also that the above equivalence fits in the commutative diagram
 $$
\xymatrix{\PP\,
\ar@{^{(}->}[r] \ar[dr]_{(-)^+}
                &    \exte\ar[d]^{(-)^\infty}    \\
                & \topinf  }
$$
where the horizontal arrow is the  embedding in (\ref{fullembed}) and $(-)^+$ is the obvious functor induced by the Alexandroff compactification, which is therefore a full embedding.

\medskip

 We now briefly recall the Lusternik-Schnirelmann category of based objects in the proper and exterior setting.

Given $X\in\PP_\infty$, the \textit{proper Lusternik-Schnirelmann category of $X$}, denoted by
$\pcat{X}$, is the smallest number $n$ (or infinite) for which $X$ admits an open  cover $\{U_0, ... , U_n\}$
such that, for each $i=0,\dots,n$, there is a diagram in $\PP_\infty$
$$\xymatrix@R=0.6cm@C=0.5cm{
  \overline{U}_i \,\ar@{^{(}->}@<-2pt>[rr]  \ar[dr]_{r} &  &    X    \\
               & \R_+   \ar[ur]_{\alpha_i}     }$$
                which commutes up to proper homotopy \cite{adomarquin}. Here, $\overline{U}_i$ denotes the closure of $U_i$.

                On the other hand, see \cite[Definition 3.2]{carlasmuquin}, given $\alpha$  a ray of $X$,
 the \textit{$\alpha$-proper Lusternik-Schnirelmann category of $X$}, denoted by
$\pcat_{\alpha}{X}$, is the smallest number $n$ (or infinite) for which $X$ admits an open  cover $\{U_0, ... , U_n\}$
such that, in the above diagram $\alpha_i$ may be chosen to be $\alpha$ for all $i=0,\dots,n$.
 This number does not depend on the proper homotopy class of $\alpha$ and the inequality
\begin{equation}\label{inecutri}
 \pcat(X)\le \pcat_{\alpha}(X)
\end{equation}
 is obvious. For instance  $\br$   has two ends and $\pcat_\alpha(\br)=\infty$ for any ray $\alpha$, while $\pcat(\br)=1$. Contrary, for $n\ge2$, $\br^n$ is strongly one ended, i.e., there is just one ray in X, up to proper homotopy, and $\pcat_\alpha(\br^n)=\pcat(\br^n)=1$ for any ray $\alpha$ \cite{adomarquin}.

In the exterior setting,
given $X\in \exte_0$ and a ray $\alpha$ of $X$, the \textit{$\alpha$-exterior Lusternik Schnirelmann category} of $X$, denoted by
$\ecat_{\alpha}{X}$, is the smallest number $n$ (or infinite) for which $X$ admits an open  cover $\{U_0, ... , U_n\}$
such that, for each $i=0,\dots,n$, there  is a diagram
\begin{equation}\label{uff}
\xymatrix{
 U_i \ar@{^{(}->}[rr] \ar[dr]_{r}
                &  &    X    \\
                & {\mathbb{R}_+} \ar[ur]_{\alpha }   }
                \end{equation}
commuting up to exterior homotopy. Note that, since each $U_i$ is considered with the induced externology (and not the cocompact one) there is no need to consider its closure as is done in defining $\pcat$.  For any $X\in\PP_\infty$, and any ray $\alpha$ of $X$,
\begin{equation}\label{catego}
\pcat_{\alpha}({X})=\ecat_{\alpha}({X_{cc}}).
\end{equation}

\section{Exterior and proper sectional category}

  In order to give a homotopy functorial description of the proper  topological complexity we  need to regard it  as a particular instance of the sectional category in the proper, or more generally, exterior context, which we now introduce and study. For the arguments to remain consistent, we require all spaces to be Hausdorff and completely normal. This means that any two separated subsets must be separable by neighborhoods or, equivalently, that every subspace is normal. Additionally, we need the product of spaces to inherit these properties. Therefore, we restrict our discussion to the class of metrizable spaces, which naturally satisfy these conditions.

\begin{definition}
Let $f\colon X\rightarrow Y$ be an exterior map. An open subset
$U\subseteq Y$, non necessarily exterior, is said to be \emph{e-sectional} if there
exists, up to exterior homotopy, a commutative diagram in
$\mathbf{E}$ of the form
$$\xymatrix{
{U} \ar@{^{(}->}@<-2pt>[rr]\ar[dr]& & {Y} \\
 & {X} \ar[ur]_f }$$
The \emph{exterior sectional category} of $f$, denoted by
$\esecat(f)$, is the least integer $k$ such that $Y$ can
be covered by $k+1$  e-sectional open subsets. If no such $k$
exists, then we set $\esecat(f)=\infty .$
\end{definition}

\begin{remark}\label{rem1} Let $\alpha\colon \br_+\to X$ be a ray of the exterior space $X$. Then, by definition,
$$
\ecat_{\alpha}(X)=\esecat(\alpha).
$$
\end{remark}

In what follows, $\secat$ denotes the classical sectional category.

\begin{proposition}\label{propo1} For any exterior map $f\colon X\to Y$, $\secat(f)\le \esecat(f)$. Equality holds if $Y$ is endowed with the total externology.
\end{proposition}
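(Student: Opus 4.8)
The plan is to pass back and forth between the exterior and topological settings, using that the forgetful functor carries exterior homotopies to ordinary ones, and that, relative to the total externology, exteriority becomes automatic.

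For the inequality $\secat(f)\le\esecat(f)$, which holds for any exterior map, let $\{U_0,\dots,U_n\}$ be an e-sectional open covering of $Y$ realizing $n=\esecat(f)$, together with exterior maps $r_i\colon U_i\to X$ and exterior homotopies witnessing $f\circ r_i\simeq_e \iota_i$, where $\iota_i\colon U_i\hookrightarrow Y$ denotes the inclusion. Since the exterior cylinder $U_i\bar\times I$ has $U_i\times I$ as its underlying topological space, the underlying continuous map of each of these exterior homotopies is an ordinary homotopy $U_i\times I\to Y$ from $f\circ r_i$ to $\iota_i$. Hence $\{U_0,\dots,U_n\}$ is a sectional covering for $f$ in the classical sense, and $\secat(f)\le n=\esecat(f)$.

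For the reverse inequality when $Y$ carries the total externology, I would upgrade an arbitrary classical sectional decomposition to an exterior one. Start from open sets $U_0,\dots,U_n$ covering $Y$, with $n=\secat(f)$, with continuous maps $s_i\colon U_i\to X$ and ordinary homotopies $f\circ s_i\simeq \iota_i$. The key point is that, since every open subset of $Y$ is exterior, the relative externology on each $U_i$ is the total externology of $U_i$, and therefore the externology of the cylinder $U_i\bar\times I$ --- which is generated by the sets $E\times I$ with $E$ in the externology of $U_i$, hence contains $\emptyset\times I$ and so all of $U_i\times I$ --- is total as well. Consequently every continuous map with domain $U_i$, and every continuous map with domain $U_i\bar\times I$, is automatically exterior; in particular $s_i$ is exterior, the homotopy $f\circ s_i\simeq \iota_i$ is an exterior homotopy, and $\iota_i$ is exterior by the very definition of the relative externology. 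Thus each $U_i$ is e-sectional and $\esecat(f)\le n=\secat(f)$, which together with the previous paragraph gives the equality.

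I do not anticipate a genuine obstacle. The only delicate bookkeeping is the verification that passing to subspaces and to cylinders of subspaces preserves the total externology, so that no hidden exteriority requirement on $s_i$, on the homotopies, or on $f$ survives when $Y$ has the total externology; and, on the other side, being careful that in the definition of $\esecat$ the covering sets need not themselves be exterior, which is exactly what allows the $U_i$ coming from a classical decomposition to be used verbatim.
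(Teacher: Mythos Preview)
Your proof is correct and follows essentially the same route as the paper's: forget the exterior structure to get $\secat(f)\le\esecat(f)$, and, for the reverse inequality under the total externology, observe that the cylinder $U\bar\times I$ inherits the total externology so that any classical homotopy local section is automatically an exterior one. The paper additionally remarks that $X$ must carry the total externology as well (since $\emptyset$ is exterior in $Y$, hence in $X$), but your argument bypasses this by noting that any continuous map out of a space with total externology is exterior regardless of the target.
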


\begin{proof} The first statement is elementary. For the second note first that $X$ has also the total externology as the empty set, being $f^{-1}(\emptyset)$, is exterior. Let $s\colon U\to X$ be a homotopy local section
of $f$ and  $H\colon U\times
I\rightarrow Y$ a homotopy between the inclusion $U\hookrightarrow Y$ and $f\circ s$. In view of (\ref{totaltrivial}), $Y_{tot}\times I_{tr}=(Y\times I)_{tot}$ and thus $U_{tot}\times I_{tr}=(U\times I)_{tot}$ so that $H$ is an exterior homotopy and $U$ is e-sectional. This shows that $\esecat(f)\le \secat(f)$ and equality holds.
\end{proof}

\begin{remark} Strict
 inequality may occur in the previous result. For instance, if we endow $\br^n$ with the cocompact externology,  $i\colon \mathbb{R}_+\hookrightarrow \mathbb{R}^n$ is not an exterior homotopy equivalence. Moreover, by Remark \ref{rem1}, $\esecat(i)=\pcat_i(\br^n)$ which is $1$ if $n\ge2$ and $\infty$ if $n=1$. However $\secat(i)=0$.
\end{remark}

\begin{lemma}\label{einv-1}
Given
$$\xymatrix{
{X} \ar[rr]^{\lambda } \ar[dr]_f & & {X'} \ar[dl]^{f'} \\
 & {Y} & }$$
an exterior homotopy commutative diagram,
 $\esecat(f')\leq \esecat(f)$. In
particular, if $\lambda $ is an exterior homotopy equivalence,
$\esecat(f')=\esecat(f).$
\end{lemma}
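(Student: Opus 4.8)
The plan is to transport $e$-sectional covers along $\lambda$. Suppose $\esecat(f)=k$ and let $\{U_i\}_{i=0}^k$ be an exterior open covering of $Y$ with, for each $i$, an exterior map $r_i\colon U_i\to X$ together with an exterior homotopy $j_i\simeq_e f\circ r_i$, where $j_i\colon U_i\hookrightarrow Y$ is the inclusion. I would simply show that the \emph{same} open sets $U_i$ witness $\esecat(f')\le k$, using the composite $\lambda\circ r_i\colon U_i\to X'$ as the candidate local section up to exterior homotopy.

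First I would check this composite is well-behaved: $r_i$ is exterior by hypothesis and $\lambda$ is exterior, so $\lambda\circ r_i$ is exterior. Then I would string together the two homotopies: composing $j_i\simeq_e f\circ r_i$ with the given exterior homotopy $f\simeq_e f'\circ\lambda$ (precomposed with $r_i$) yields
$$
j_i\;\simeq_e\; f\circ r_i\;\simeq_e\; f'\circ\lambda\circ r_i.
$$
Here I need the facts that exterior homotopy is transitive and that it is preserved under composition with exterior maps on either side; both are standard properties of the exterior cylinder $\bar\times I$ recalled in \S1 (composition on the source side uses that $r_i$ is exterior, hence $r_i\bar\times\id_I$ makes sense, and composition on the target side is immediate). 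Concatenating these homotopies shows $U_i$ is $e$-sectional for $f'$, and since the $\{U_i\}$ still form an exterior open covering of $Y$, we conclude $\esecat(f')\le\esecat(f)$.

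For the "in particular" clause, if $\lambda$ is an exterior homotopy equivalence with exterior homotopy inverse $\mu\colon X'\to X$, then $f'\simeq_e f'\circ\lambda\circ\mu$ so the diagram with $f$ and $f'$ swapped, using $\mu$ in place of $\lambda$, is also exterior homotopy commutative: indeed $f\circ\mu\simeq_e f'\circ\lambda\circ\mu\simeq_e f'$ wait—I need $f\circ\mu\simeq_e f'$, which follows from $f\simeq_e f'\circ\lambda$ (precompose with $\mu$, giving $f\circ\mu\simeq_e f'\circ\lambda\circ\mu\simeq_e f'$). Applying the already-proved inequality in this direction gives $\esecat(f)\le\esecat(f')$, hence equality.

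There is no real obstacle here; the only point requiring care is the bookkeeping of exterior homotopies—specifically confirming that pre- and post-composition of an exterior homotopy by an exterior map is again an exterior homotopy, which is where the externology on the cylinder $X\bar\times I$ (generated by the sets $E\times I$ with $E$ exterior in $X$) does its job. Once that is granted, the argument is a routine transport of covers identical in spirit to the classical Ganea-type invariance of sectional category.
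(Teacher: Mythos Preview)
Your proof is correct and follows essentially the same approach as the paper: both arguments observe that if $s\colon U\to X$ is an exterior homotopy local section of $f$, then $\lambda\circ s$ is one for $f'$. One minor terminological slip: the definition of $\esecat$ only requires the $U_i$ to be open in $Y$, not exterior open (they carry the relative externology so that the $r_i$ make sense as exterior maps), so your phrase ``exterior open covering'' should simply read ``open covering''.
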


\begin{proof}
Simply note that, if  $U\subseteq Y$ is an e-sectional open set with respect to $f$ through the exterior homotopy local section
$s\colon U\rightarrow X$, then
$\lambda \circ s\colon U\rightarrow X'$ is an exterior homotopy local section of $f'$.
\end{proof}

Combining this lemma with Remark \ref{rem1} we obtain:

\begin{corollary}\label{coro1}
Let $f\colon X\rightarrow Y$ be an exterior map and $\alpha\colon\br_+\to X$ a ray of $X$. Then,
$$\esecat(f)\leq \ecat_{f\circ \alpha }(Y)$$
\hfill$\square$
\end{corollary}

\begin{lemma}\label{einv-2}
Let
$$\xymatrix{
 & {X} \ar[dl]_f \ar[dr]^{f'} \\
{Y} \ar[rr]_{\mu }^{\simeq_{e} } & & {Y'} & }$$
be an exterior homotopy commutative diagram where $\mu $
is an exterior homotopy equivalence. Then,
$\esecat(f)=\esecat(f').$
\end{lemma}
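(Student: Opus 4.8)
The statement to prove is Lemma~\ref{einv-2}: given an exterior homotopy commutative triangle with $\mu\colon Y\to Y'$ an exterior homotopy equivalence, $f' = \mu\circ f$ up to exterior homotopy, we have $\esecat(f)=\esecat(f')$.

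The plan is to prove the two inequalities separately, transporting e-sectional open sets across $\mu$ in each direction. For $\esecat(f')\le\esecat(f)$: let $U\subseteq Y$ be an e-sectional open set for $f$ with exterior homotopy local section $s\colon U\to X$, so the inclusion $U\hookrightarrow Y$ is exterior homotopic to $f\circ s$. I would like to set $U' = \mu(U)$, but since $\mu$ need not be open (or injective), the safe choice is $U' = \mu^{-1}$ applied the other way — actually the clean move is to use a homotopy inverse. Let $\nu\colon Y'\to Y$ be an exterior homotopy inverse of $\mu$, so $\mu\nu\simeq_e\id_{Y'}$ and $\nu\mu\simeq_e\id_Y$. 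Put $U' = \nu^{-1}(U)$, which is open in $Y'$ (and exterior if $U$ is, but we don't even need that). Then the restriction $\nu|_{U'}\colon U'\to U$ composed with $s$ gives a candidate section $s\circ\nu|_{U'}\colon U'\to X$, and one checks the inclusion $U'\hookrightarrow Y'$ is exterior homotopic to $f'\circ s\circ\nu|_{U'} = \mu\circ f\circ s\circ\nu|_{U'}$: indeed $f\circ s\circ\nu|_{U'}\simeq_e (U\hookrightarrow Y)\circ\nu|_{U'} = \nu|_{U'}$ composed with nothing, i.e. equals the restriction of $\nu$; then $\mu\circ\nu|_{U'}\simeq_e \id_{Y'}|_{U'} = (U'\hookrightarrow Y')$. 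Composing these exterior homotopies (using that restriction of an exterior homotopy to an open subset is again an exterior homotopy, and that $\simeq_e$ is compatible with composition) shows $U'$ is e-sectional for $f'$. Since the $\nu^{-1}(U_i)$ cover $Y'$ when the $U_i$ cover $Y$, this yields $\esecat(f')\le\esecat(f)$.

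For the reverse inequality, I would run the symmetric argument with the roles of $\mu$ and $\nu$ interchanged: an e-sectional open set $V\subseteq Y'$ for $f'$ with section $t\colon V\to X$ pulls back to $V' = \mu^{-1}(V)\subseteq Y$, with section $t\circ\mu|_{V'}\colon V'\to X$, and the same chain of exterior homotopies (now using $\nu\mu\simeq_e\id_Y$) shows $V'$ is e-sectional for $f$. Hence $\esecat(f)\le\esecat(f')$, and equality follows. Alternatively, and more economically, the reverse inequality is immediate from Lemma~\ref{einv-1}: we have the exterior homotopy commutative triangle with top arrow $\lambda=\id_X$ and the two legs $f$ and $f'$ into $Y$ and... no — Lemma~\ref{einv-1} is set up with a common \emph{target}, so to deduce $\esecat(f)\le\esecat(f')$ I would instead apply Lemma~\ref{einv-1} to the diagram with legs $\mu\circ f = f'$ and $f'$ into common target $Y'$ via $\lambda=\id$: trivial. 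That does not help. The genuinely useful observation is: Lemma~\ref{einv-1} already gives, for the triangle in the hypothesis read with target $Y$... again the targets differ. So the honest route is the direct pullback argument in both directions as above; it is short and self-contained.

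The only real subtlety — and thus the step to be careful with — is the bookkeeping of exterior homotopies under restriction to open subsets and under composition: one must invoke that if $H\colon Z\bar\times I\to W$ is an exterior homotopy and $O\subseteq Z$ is open, then $H$ restricts to an exterior homotopy $O\bar\times I\to W$ (which holds because the externology on $O\bar\times I$ is inherited appropriately), and that pre- and post-composition of exterior maps with exterior homotopies again yields exterior homotopies. All of these are standard facts of exterior homotopy theory recalled in \S1, so the proof amounts to assembling them. I expect no essential obstacle beyond this routine verification; the argument is parallel to the proof of Lemma~\ref{einv-1} itself.
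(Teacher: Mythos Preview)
Your proof is correct and follows essentially the same approach as the paper: the paper's one-line proof records only the observation that if $U\subseteq Y'$ is e-sectional for $f'$ then $\mu^{-1}(U)$ is e-sectional for $f$ (your second inequality), leaving the other direction implicit by symmetry via the homotopy inverse. You have simply made both directions explicit by also carrying out the pullback along $\nu$, together with the routine bookkeeping of exterior homotopies.
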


\begin{proof}
Simply observe that if $U\subseteq Y'$ is an e-sectional open set with respect to $f'$ then $\mu^{-1}(U)$ is e-sectional with respect to $f$.
\end{proof}

An immediate consequence is the following:

\begin{proposition}\label{propo2}
Let
$$\xymatrix{
{X} \ar[rr]^{\lambda }_{\simeq_{e} } \ar[d]_f  & & {X'} \ar[d]^{f'} \\
{Y} \ar[rr]_{\mu }^{\simeq_{e}} & & {Y'} }$$ an exterior homotopy commutative diagram where $\lambda
$ and $\mu $ are exterior homotopy equivalences. Then
$\esecat(f)=\esecat(f').$
\end{proposition}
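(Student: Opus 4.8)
The plan is to deduce Proposition~\ref{propo2} directly from the two preceding lemmas by factoring the square through an intermediate map, exactly as the phrase ``an immediate consequence'' suggests. First I would consider the composite $f'\circ\lambda\colon X\to Y'$. This gives two triangles: the triangle
$$\xymatrix{
{X} \ar[rr]^{\lambda }\ar[dr]_{f'\circ\lambda} & & {X'} \ar[dl]^{f'} \\
 & {Y'} & }$$
which commutes strictly (hence up to exterior homotopy), and the triangle
$$\xymatrix{
 & {X} \ar[dl]_f \ar[dr]^{f'\circ\lambda} \\
{Y} \ar[rr]_{\mu }^{\simeq_{e} } & & {Y'} & }$$
which commutes up to exterior homotopy precisely because the given square does, i.e. $\mu\circ f\simeq_e f'\circ\lambda$.

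Next I would apply Lemma~\ref{einv-1} to the first triangle: since $\lambda$ is an exterior homotopy equivalence, it yields $\esecat(f')=\esecat(f'\circ\lambda)$. Then I would apply Lemma~\ref{einv-2} to the second triangle: since $\mu$ is an exterior homotopy equivalence, it yields $\esecat(f)=\esecat(f'\circ\lambda)$. Chaining these two equalities gives $\esecat(f)=\esecat(f'\circ\lambda)=\esecat(f')$, which is the claim.

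There is essentially no obstacle here; the only point requiring a moment's care is verifying that the second triangle genuinely commutes up to exterior homotopy, which amounts to reading the hypothesis on the square correctly ($\mu\circ f\simeq_e f'\circ\lambda$ is exactly the statement that the square is exterior homotopy commutative). Everything else is a formal consequence of Lemmas~\ref{einv-1} and~\ref{einv-2}, so the proof is just a two-line bookkeeping argument. I would write it up in roughly that form: introduce $f'\circ\lambda$, invoke Lemma~\ref{einv-1} for $\lambda$ and Lemma~\ref{einv-2} for $\mu$, and conclude.
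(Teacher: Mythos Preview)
Your proposal is correct and matches the paper's intent: the paper gives no explicit proof, stating only that the proposition is ``an immediate consequence'' of Lemmas~\ref{einv-1} and~\ref{einv-2}, and your factorization through $f'\circ\lambda$ is precisely the natural way to make that deduction explicit.
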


In the exterior setting we also have the usual upper bound for Lusternik-Schnirelmann type invariants of products. The proof, which we summarize here for completness, follows the same argument that the one in \cite[Theorem 1.37]{C-L-O-T}.

\begin{proposition}\label{producto}
For any pair $f,g$ of exterior maps,
$$
\esecat(f\times g)\le\esecat(f)+\esecat(g).
$$
\end{proposition}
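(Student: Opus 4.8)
The plan is to mimic the classical product inequality for sectional category (as in \cite[Theorem 1.37]{C-L-O-T}), adapting each step to the exterior setting. Suppose $\esecat(f)=m$ and $\esecat(g)=n$, witnessed by e-sectional open covers $\{U_0,\dots,U_m\}$ of $Y$ and $\{V_0,\dots,V_n\}$ of $Y'$, with exterior homotopy local sections $s_i\colon U_i\to X$ and $t_j\colon V_j\to X'$. For each pair $(i,j)$ the product $U_i\times V_j$ is an open subset of $Y\times Y'$, and the map $s_i\times t_j\colon U_i\times V_j\to X\times X'$ is an exterior homotopy local section of $f\times g$ over $U_i\times V_j$: one checks that $(f\times g)\circ(s_i\times t_j)$ is exterior homotopic to the inclusion by taking the product of the two witnessing exterior homotopies, using that the exterior cylinder behaves well with products (the product externology is generated by products of exterior opens, so a product of exterior homotopies $H\times H'$ restricted along the diagonal of the $I$-coordinates is again exterior). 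Thus every $U_i\times V_j$ is e-sectional for $f\times g$.

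Next I would organize these $(m+1)(n+1)$ open sets into $m+n+1$ e-sectional open sets. For $k=0,\dots,m+n$ set
$$
W_k=\bigcup_{i+j=k} U_i\times V_j.
$$
Since the $\{U_i\}$ and $\{V_j\}$ are covers, the $\{W_k\}$ cover $Y\times Y'$. The key point is that $W_k$, although a union of several e-sectional opens, is itself e-sectional: the sets $U_i\times V_j$ appearing in $W_k$ are pairwise disjoint (if $i+j=k=i'+j'$ and $(i,j)\neq(i',j')$ then either $i\neq i'$, forcing $U_i\cap U_{i'}$ to contribute nothing since... actually one needs the standard argument). Here I would invoke the standard disjointness trick: replace each $U_i$ by the open set $U_i'\subseteq U_i$ consisting of points lying in $U_i$ but in no $U_{i'}$ with $i'$... — more precisely, one uses that on a union of \emph{disjoint} opens, local sections glue to a local section, and the sets $\{U_i\times V_j : i+j=k\}$ are made disjoint by the usual ``shrinking'' argument (the $U_i\times V_j$ with $i+j=k$ are disjoint because if $(x,y)$ lies in both $U_i\times V_j$ and $U_{i'}\times V_{j'}$ with $i+j=i'+j'$ and $i<i'$, then $j>j'$, and one shrinks using a partition-of-unity-type reindexing so that membership in $U_i$ at index level $i$ excludes membership at higher levels — this is exactly the argument in \cite{C-L-O-T} and is purely point-set, hence compatible with any externology). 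On each such disjoint piece the section $s_i\times t_j$ is defined, and since the pieces are disjoint open sets the union map is continuous; it is exterior because preimages of exterior opens are unions of exterior opens hence exterior; and the gluing of the individual exterior homotopies over disjoint opens is again an exterior homotopy. Hence $W_k$ is e-sectional, giving a cover of $Y\times Y'$ by $m+n+1$ e-sectional opens, i.e. $\esecat(f\times g)\le m+n=\esecat(f)+\esecat(g)$.

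The main obstacle I anticipate is verifying that the exterior structure is preserved throughout: specifically, (1) that a product of exterior homotopies is exterior for the product externology, which requires unwinding the definition of the product externology and the exterior cylinder and checking that $(E\times I)\times(E'\times I)$-type opens behave correctly under the diagonal reparametrization of the two time coordinates, and (2) that gluing exterior maps and exterior homotopies over a disjoint union of open sets yields an exterior map — this is where one uses that the externology is closed under supersets, so a set containing an exterior set on each piece contains an exterior set overall. The combinatorial shrinking argument itself is classical and purely topological, so it transfers verbatim; the only genuinely new checks are these two compatibility statements, and since the paper explicitly says the proof ``follows the same argument'' as \cite[Theorem 1.37]{C-L-O-T}, I expect the write-up to consist of the classical skeleton with these two exterior-compatibility remarks inserted at the appropriate points.
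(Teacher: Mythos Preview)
Your overall strategy is the paper's: adapt \cite[Theorem 1.37]{C-L-O-T} and rely on the fact that a finite disjoint union of e-sectional opens is again e-sectional. Your checks (1) and (2) are precisely what justifies the paper's one-line assertion ``since these are e-sectional and disjoint, their union is also e-sectional'', and they are correct.

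Where your write-up falls short is the combinatorial step. You set $W_k=\bigcup_{i+j=k}U_i\times V_j$ and then need the summands disjoint; you invoke an unspecified ``shrinking/partition-of-unity reindexing'' but never carry it out, and your own parenthetical ``actually one needs the standard argument'' flags the gap. The paper handles this with a device you do not mention: it first records the characterization that $\esecat(h)\le k$ iff there is a nested chain of opens $\emptyset=O_0\subseteq O_1\subseteq\cdots\subseteq O_{k+1}=Y$ with each difference $O_i\setminus O_{i-1}$ contained in an e-sectional open. Given such chains $\{O_i\}$ and $\{P_j\}$ for $f$ and $g$, one sets $Q_r=\bigcup_j O_j\times P_{r+1-j}$ and computes
\[
Q_{r+1}\setminus Q_r=\bigcup_k\,(O_k\setminus O_{k-1})\times(P_{r+2-k}\setminus P_{r+1-k});
\]
these pieces \emph{are} pairwise separable by disjoint opens (for $k<l$ the first factors lie respectively inside and outside the open set $O_{l-1}$), and intersecting the separating opens with the ambient e-sectional products gives the disjoint e-sectional pieces you were looking for. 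This nested-chain reformulation is the missing idea in your proposal; once it is in place, the rest of your argument---including the exterior-compatibility checks you correctly anticipated---goes through.
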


\begin{proof} We remark first that for any exterior map $h\colon X\to Y$ is easy to see that $\esecat(h)\leq k$ if and only if there exists a nested sequence of open sets $\emptyset =V_0 \subseteq V_1\subseteq \dots\subseteq V_{k+1}=Y$ such that $V_i{\setminus} V_{i-1}$ is contained in an e-sectional open set of  $h$.

Write $f\colon E\to B$, $g\colon E'\to B'$, $\mbox{e-secat}(f)=n$ and $\mbox{e-secat}(g)=m$. We then may construct sequences of open sets
$$\emptyset =O_0\subseteq O_1\subseteq \cdots \subseteq O_{n+1}=B$$
$$\emptyset =P_0\subseteq P_1\subseteq \cdots \subseteq P_{m+1}=B'$$
\noindent so that $O_{i+1}{\setminus} O_i\subseteq U_{i+1},$ $P_{j+1}{\setminus} P_j\subseteq W_{j+1}$, where $U_{i+1}$ and $W_{j+1}$ are, respectively,  e-sectional open sets of $f$ and $g$.

For $r\geq 1,$ define
$$Q_r=\bigcup _{j=1}^rO_j\times P_{r+1-j}\subseteq B\times B'$$
with $O_j=\emptyset $ if $j>n+1$ and $P_t=\emptyset $ if $t>m+1$. If we set $Q_0=\emptyset $, the increasing sequence $Q_0\subseteq Q_1\subseteq \cdots \subseteq Q_{n+m+1}$ satisfies
$$Q_{j+1}{\setminus} Q_j=\bigcup _{k=1}^{j+1}(O_k{\setminus} O_{k-1})\times (P_{j+2-k}{\setminus} P_{j+1-k}).$$
Moreover, $(O_k{\setminus} O_{k-1})\times (P_{j+2-k}{\setminus} P_{j+1-k})\subseteq U_k\times W_{j+2-k},$ the latter being open and e-sectional of $f\times g$. As $j+2-k$ decreases as $k$ increases, it follows that  $(O_k{\setminus} O_{k-1})\times (P_{j+2-k}{\setminus} P_{j+1-k})$ and $(O_l{\setminus} O_{l-1})\times (P_{j+2-l}{\setminus} P_{j+1-l})$, being separated sets, can be separated by  disjoint open neighborhoods (recall that all considered spaces are assumed to be metrizable and therefore completely normal). We intersect these neighborhoods with $U_k\times W_{j+2-k}$ and $U_l\times W_{j+2-l}$ to obtain disjoint open e-sectional neighborhoods of $(O_k{\setminus} O_{k-1})\times (P_{j+2-k}{\setminus} P_{j+1-k})$ and $(O_l{\setminus} O_{l-1})\times (P_{j+2-l}{\setminus} P_{j+1-l})$ for $k\neq l$. Since these are e-sectional and disjoint, their union is also e-sectional. Hence, $$(O_k{\setminus} O_{k-1})\times (P_{j+2-k}{\setminus} P_{j+1-k})\bigcup (O_l{\setminus} O_{l-1})\times (P_{j+2-l}{\setminus} P_{j+1-l})$$ \noindent is contained in an open e-sectional subset. We iterate this argument  to show that $Q_{j+1}{\setminus} Q_j$ is contained in an open e-sectional subset. From there one easily build an e-sectional covering of $f\times g$ of $n+m+1$ elements.
\end{proof}

\begin{corollary}\label{catepro}
Given $X,Y\in \exte_0$ with rays  $\alpha$ and $\beta$ respectively,
$$
\ecat_{(\alpha,\beta)}(X\times Y)\le \ecat_\alpha(X)+\ecat_\beta(Y).
$$
\end{corollary}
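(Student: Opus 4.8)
The plan is to deduce this from Proposition \ref{producto} exactly as the classical $\cat(X\times Y)\le\cat(X)+\cat(Y)$ is deduced from the subadditivity of $\secat$ for products of the corresponding diagonal-type maps. First I would recall from Remark \ref{rem1} that $\ecat_\alpha(X)=\esecat(\alpha)$ where $\alpha\colon\br_+\to X$ is the ray, and similarly $\ecat_\beta(Y)=\esecat(\beta)$ and $\ecat_{(\alpha,\beta)}(X\times Y)=\esecat(\gamma)$ for the appropriate ray $\gamma$ of $X\times Y$. The natural candidate is $\gamma=(\alpha,\beta)\colon\br_+\to X\times Y$, sending $t\mapsto(\alpha(t),\beta(t))$; I would first check this is indeed a ray, i.e.\ an exterior map when $\br_+$ carries the cocompact externology, which follows since the preimage of a basic exterior open set of $X\times Y$ (one containing $E\times E'$ with $E,E'$ exterior) contains $\alpha^{-1}(E)\cap\beta^{-1}(E')$, an intersection of two cocompact open sets of $\br_+$, hence cocompact.

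The key step is to relate the single ray $\gamma\colon\br_+\to X\times Y$ to the product map $\alpha\times\beta\colon\br_+\times\br_+\to X\times Y$. Here $\gamma$ factors as $\gamma=(\alpha\times\beta)\circ D$ where $D\colon\br_+\to\br_+\times\br_+$ is the diagonal. So by Lemma \ref{einv-1} (functoriality of $\esecat$ under precomposition, applied to the triangle with top map $D$), $\esecat(\gamma)\le\esecat(\alpha\times\beta)$, provided the relevant triangle commutes up to exterior homotopy — in fact it commutes on the nose. Then Proposition \ref{producto} gives $\esecat(\alpha\times\beta)\le\esecat(\alpha)+\esecat(\beta)$, and combining the two inequalities with Remark \ref{rem1} yields $\ecat_{(\alpha,\beta)}(X\times Y)=\esecat(\gamma)\le\esecat(\alpha)+\esecat(\beta)=\ecat_\alpha(X)+\ecat_\beta(Y)$, as desired.

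The main obstacle I anticipate is a bookkeeping one rather than a conceptual one: one must be careful that the externology on $\br_+\times\br_+$ used in Proposition \ref{producto} (the product externology of two copies of $\br_+$ with the cocompact externology) is compatible with $D$ being exterior, i.e.\ that $D\colon(\br_+)_{cc}\to(\br_+)_{cc}\times(\br_+)_{cc}$ is an exterior map. This is immediate since the preimage under $D$ of an open set containing $E\times E'$ (with $E,E'$ cocompact in $\br_+$) contains $E\cap E'$, which is cocompact; equivalently one can invoke $(\br_+)_{cc}\times(\br_+)_{cc}=(\br_+\times\br_+)_{cc}$ together with properness of the usual diagonal. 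The only other point needing a word is that Lemma \ref{einv-1} applies verbatim with $f=\alpha\times\beta$ (an exterior map into $X\times Y$) and $f'=\gamma$, so that its conclusion $\esecat(\gamma)\le\esecat(\alpha\times\beta)$ is legitimate. Everything else is routine.
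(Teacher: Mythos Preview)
Your overall strategy matches the paper's, but the application of Lemma \ref{einv-1} is backwards and this is a genuine gap. Given the triangle $f' \circ \lambda \simeq_e f$, Lemma \ref{einv-1} yields $\esecat(f') \le \esecat(f)$, not the other way round: a homotopy local section of $f$ can be pushed forward along $\lambda$ to give one for $f'$. With $\gamma = (\alpha\times\beta)\circ D$ you have $f=\gamma$, $f'=\alpha\times\beta$, $\lambda = D$, and the lemma then produces $\esecat(\alpha\times\beta)\le\esecat(\gamma)$, which is useless here. In your last paragraph you try to repair this by relabelling $f=\alpha\times\beta$, $f'=\gamma$; but then $\lambda$ would have to go from $\br_+\times\br_+$ to $\br_+$ and satisfy $\gamma\circ\lambda\simeq_e\alpha\times\beta$, and $D$ goes the wrong way. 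So as written the key inequality $\esecat(\gamma)\le\esecat(\alpha\times\beta)$ is unjustified.

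The paper closes this gap by observing that the diagonal $\Delta\colon \br_+\to\br_+\times\br_+$ is not merely an exterior map but an exterior \emph{homotopy equivalence}; one then invokes the ``in particular'' clause of Lemma \ref{einv-1} to obtain the equality $\esecat\bigl((\alpha,\beta)\bigr)=\esecat(\alpha\times\beta)$, after which Proposition \ref{producto} finishes the argument. You should add this: e.g.\ note that the first projection $p\colon\br_+\times\br_+\to\br_+$ is exterior, that $p\circ\Delta=\id_{\br_+}$, and that the straight-line homotopy from $\Delta\circ p$ to the identity is exterior on $(\br_+\times\br_+)\bar\times I$. (Incidentally, your claim that $(\br_+)_{cc}\times(\br_+)_{cc}=(\br_+\times\br_+)_{cc}$ is false --- the set $\{(x,y):xy>1\}$ is exterior for the product externology but its complement is non-compact --- though this side remark is not needed once you argue via the homotopy equivalence.)
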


\begin{proof} By definition $\ecat_{(\alpha,\beta)}(X\times Y)=\esecat(\alpha,\beta)$ which coincides with $\esecat(\alpha\times\beta)$ by Lemma \ref{einv-1} in view of the diagram
$$\xymatrix{
{\br_+} \ar[rr]^(.41){\Delta }_(.43){\simeq_e} \ar[dr]_{(\alpha,\beta)} & & {\br_+\times\br_+.} \ar[dl]^{\alpha\times\beta} \\
 & {X\times Y} & }$$
To finish, apply Proposition \ref{producto}.
 \end{proof}

\begin{remark}\label{remi}
The previous corollary does not let us conclude that\break $
\pcat_{(\alpha,\beta)}(X\times Y)\le \pcat_\alpha(X)+\pcat_\beta(Y)$ for any $X,Y\in \PP_\infty$ with rays  $\alpha$ and $\beta$ respectively. By formula (\ref{catego}) we do get that $\pcat_{(\alpha,\beta)}(X\times Y)=\ecat _{(\alpha,\beta)}(X\times Y)_{cc}$. However, this cannot be compare to   $\ecat_{(\alpha,\beta)}(X_{cc}\times Y_{cc})$ as $(X\times Y)_{cc}$ and $X_{cc}\times Y_{cc}$ are not, in general, of the same exterior homotopy type. In fact, the identity
$$
X_{cc}\times Y_{cc}\stackrel{\id}{\longrightarrow} (X\times Y)_{cc}
$$
is an exterior map but, in general, it fails to be exterior in the other direction.

For the first assertion, let
  $E$ be an exterior open set of $ (X\times Y)_{cc}$ so that $(X\times X){\setminus} E$ is compact, so are its projections  $K_X\subseteq X$ and $K_Y\subseteq Y$ over $X$ and $Y$ respectively. Then, $E\supseteq E_X\times E_Y$ where $E_X=X{\setminus} K_X$ and $E_Y=Y{\setminus} K_Y$.

\end{remark}
 Nevertheless, a slight reformulation of \cite[Proposition 2.1]{carlasquin} proves:

 \begin{proposition}\label{cateprop}
Given connected polyhedra $X,Y\in \PP_\infty$ with rays  $\alpha$ and $\beta$ respectively,
$$
\pcat_{(\alpha,\beta)}(X\times Y)\le \pcat_\alpha(X)+\pcat_\beta(Y).
$$
\end{proposition}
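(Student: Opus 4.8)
The plan is to reduce the proper statement to its exterior counterpart, Corollary \ref{catepro}, by controlling the discrepancy between $(X\times Y)_{cc}$ and $X_{cc}\times Y_{cc}$ pointed out in Remark \ref{remi}. Recall from formula (\ref{catego}) that $\pcat_{(\alpha,\beta)}(X\times Y)=\ecat_{(\alpha,\beta)}\bigl((X\times Y)_{cc}\bigr)$ and, likewise, $\pcat_\alpha(X)=\ecat_\alpha(X_{cc})$, $\pcat_\beta(Y)=\ecat_\beta(Y_{cc})$. By Corollary \ref{catepro} we already know $\ecat_{(\alpha,\beta)}(X_{cc}\times Y_{cc})\le \ecat_\alpha(X_{cc})+\ecat_\beta(Y_{cc})$. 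So the crux is the inequality
$$
\ecat_{(\alpha,\beta)}\bigl((X\times Y)_{cc}\bigr)\le \ecat_{(\alpha,\beta)}(X_{cc}\times Y_{cc}),
$$
i.e.\ transferring an $\alpha,\beta$-categorical open cover along the exterior map $\id\colon X_{cc}\times Y_{cc}\to (X\times Y)_{cc}$ in the \emph{wrong} direction. This is where the hypothesis that $X,Y$ be connected polyhedra enters: it is exactly the setting of \cite[Proposition 2.1]{carlasquin}, and the claim is that that proof adapts verbatim.

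Concretely, the argument I would run is as follows. Let $\{U_0,\dots,U_n\}$ be an open cover of $X_{cc}\times Y_{cc}$ realizing $\ecat_{(\alpha,\beta)}$, so for each $i$ the inclusion $U_i\hookrightarrow X_{cc}\times Y_{cc}$ is exteriorly homotopic to $(\alpha,\beta)\circ r$ for an exterior map $r\colon U_i\to\br_+$; in particular each $U_i$ is open in the product topology and the homotopy $H_i\colon U_i\bar\times I\to X_{cc}\times Y_{cc}$ is exterior. Since $X$ and $Y$ are locally compact $\sigma$-compact, and since (by the computation in Remark \ref{remi}) every cocompact exterior open set of $X\times Y$ contains a product $E_X\times E_Y$ of cocompact exterior open sets, one checks that the \emph{same} cover $\{U_i\}$ and the \emph{same} homotopies $H_i$ are already exterior when the target is regrouped as $(X\times Y)_{cc}$: an exterior open of $(X\times Y)_{cc}$ contains some $E_X\times E_Y$, hence is exterior in $X_{cc}\times Y_{cc}$, so $H_i^{-1}$ of it is cylinder-exterior-open as required, and $(\alpha,\beta)$ is still a ray of $(X\times Y)_{cc}$. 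That is precisely the reformulation of \cite[Proposition 2.1]{carlasquin} alluded to before the statement. Chaining the three (in)equalities gives the proposition.

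The main obstacle is the last reduction: one must be careful that the covering $\{U_i\}$, which a priori consists of sets open in $X_{cc}\times Y_{cc}$ \emph{as a topological space} — and here the product externology plays no role for openness, only the underlying topology $\T_X\times\T_Y$ matters, which coincides with the topology of $(X\times Y)_{cc}$ — really does remain a covering by honest open sets in $(X\times Y)_{cc}$; this is immediate since both exterior spaces have the same underlying topological space $X\times Y$. The genuinely substantive point, and the reason the polyhedron/connectedness hypothesis is invoked, is that one needs the $\alpha,\beta$-version (not merely the base-point-free $\pcat$) to survive the change of externology, which forces one to reuse the \emph{specific} ray $(\alpha,\beta)$ and the \emph{specific} homotopies rather than just their classes; verifying exteriorness of those fixed homotopies against the coarser externology of $(X\times Y)_{cc}$ — using $E\supseteq E_X\times E_Y$ — is the technical heart, and it is exactly what \cite[Proposition 2.1]{carlasquin} supplies, so I would cite it and note the verbatim adaptation rather than reprove it.
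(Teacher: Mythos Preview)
Your reduction has a genuine direction error at the ``technical heart''. You correctly observe that the identity $X_{cc}\times Y_{cc}\to (X\times Y)_{cc}$ is exterior, i.e.\ every cocompact $E\subset X\times Y$ is exterior for the product externology. This lets you push the \emph{target} of $H_i$ forward: $H_i\colon U_i\bar\times I\to X_{cc}\times Y_{cc}\to (X\times Y)_{cc}$ is exterior. But to witness $\ecat_{(\alpha,\beta)}\bigl((X\times Y)_{cc}\bigr)$ you need the \emph{domain} $U_i$ to carry the relative externology induced from $(X\times Y)_{cc}$, and hence the cylinder to be $U_i^{cc}\bar\times I$. Since the cocompact externology is \emph{coarser} than the product one, $U_i^{cc}$ has \emph{fewer} exterior opens than $U_i^{prod}$, so the identity $U_i^{cc}\to U_i^{prod}$ is not exterior and the cylinder $U_i^{cc}\bar\times I$ has fewer exterior opens than $U_i^{prod}\bar\times I$. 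Knowing that $H_i^{-1}(E)$ contains some $F\times I$ with $F\supseteq (E_X\times E_Y)\cap U_i$ does \emph{not} give you an $F'\times I$ with $F'$ of the form $(X\times Y\setminus K)\cap U_i$, $K$ compact; in fact $(E_X\times E_Y)\cap U_i$ is typically not of that form when $X,Y$ are non-compact. So the sentence ``$H_i^{-1}$ of it is cylinder-exterior-open as required'' is exactly where the argument breaks. This is precisely the obstruction flagged in Remark~\ref{remi}, and nothing you wrote removes it; in particular, \cite[Proposition~2.1]{carlasquin} is a statement about the unbased $\pcat$, not about transferring externologies, and its proof does not supply the step you attribute to it.

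The paper's proof takes a completely different route and never compares the two externologies on $X\times Y$. It simply quotes \cite[Proposition~2.1]{carlasquin} in its original unbased form, $\pcat(X\times Y)\le\pcat(X)+\pcat(Y)$, and then uses Mihalik's theorem \cite[Theorem~2.2]{mi} that the product of two connected, non-compact polyhedra is strongly one-ended; hence $\pcat(X\times Y)=\pcat_{(\alpha,\beta)}(X\times Y)$. The right-hand side is then bounded via the trivial inequality~(\ref{inecutri}): $\pcat(X)\le\pcat_\alpha(X)$ and $\pcat(Y)\le\pcat_\beta(Y)$. The polyhedral and connectedness hypotheses enter through these two citations, not through any externology comparison.
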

Recall that a polyhedron is a space homeomorphic to the geometric realization of a simplicial complex. A polyhedron in $\PP_\infty$, being non compact but locally compact, is necessarily the realization of a non finite but locally finite simplicial complex.
\begin{proof}
By Proposition 2.1 of op. cit.
$$
\pcat(X\times Y)\le \pcat(X)+\pcat(Y).
$$
Moreover, as $X\times Y$ is strongly one-ended \cite[Theorem 2.2]{mi}, $\pcat(X\times Y)=\pcat_{(\alpha,\beta)}(X\times Y)$. The result follows from the trivial inequality (\ref{inecutri}).
\end{proof}

The following extends the previous result whenever one of the spaces is compact and it a consequence of previous results, including Proposition \ref{producto}. Let $X\in \PP_\infty$ with ray  $\alpha$ and let $Y$ be a path-connected compact space. Consider the ray $\alpha'$ in $X\times Y$ given by $\alpha'(t)=(\alpha(t),y_0)$ for a fixed point $y_0\in Y$. Then:

\begin{proposition} $\pcat_{\alpha'}(X\times Y)\le\pcat_{\alpha}(X)+\cat(Y)$.
\end{proposition}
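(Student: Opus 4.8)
The strategy is to reduce the statement to Proposition \ref{producto} via formula (\ref{catego}), in the spirit of the proof of Corollary \ref{catepro}, while handling the fact that $Y$ carries the cocompact externology trivially (since $Y$ is compact). First I would record that since $Y$ is compact, $Y_{cc}$ is the exterior space whose only exterior open set is $Y$ itself (the trivial externology), and hence for the map $\alpha_0\colon \br_+\to Y$, $\alpha_0(t)=y_0$, any classical section/homotopy over an open set of $Y$ is automatically exterior (the cylinder $Y\bar\times I = Y\times I$, as in the proof of Proposition \ref{propo1}). Consequently $\ecat_{\alpha_0}(Y_{cc})=\cat(Y)$: an open cover $\{V_0,\dots,V_k\}$ of $Y$ realizing $\cat(Y)$ gives, for each $j$, a homotopy from the inclusion $V_j\hookrightarrow Y$ to the constant map at $y_0$ (using path-connectedness of $Y$), which is an exterior homotopy against $\alpha_0$; conversely $\cat(Y)\le\ecatW$ is clear by forgetting the externology.

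**Reduction via the cylinder formula.** By (\ref{catego}) we have $\pcat_{\alpha'}(X\times Y) = \ecat_{\alpha'}((X\times Y)_{cc})$. The key point is then that, because $Y$ is compact, the identity map $(X\times Y)_{cc}\to X_{cc}\times Y_{cc}$ is actually an exterior \emph{homeomorphism}: an exterior open set of $X_{cc}\times Y_{cc}$ contains some $E_X\times Y$ with $X\setminus E_X$ compact, hence its complement in $X\times Y$ is contained in $(X\setminus E_X)\times Y$, which is compact; conversely, as noted in Remark \ref{remi}, an exterior open set of $(X\times Y)_{cc}$ contains a product $E_X\times E_Y$ of cocompact opens, but since $E_Y$ can be taken to be $Y$ itself up to enlarging (the full externology of $Y_{cc}$ being $\{Y\}$, every cofinite-compact complement argument collapses), it is exterior in the product. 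This is precisely the obstruction that fails in Remark \ref{remi} when $Y$ is noncompact, and compactness of $Y$ is exactly what repairs it. Thus $\ecat_{\alpha'}((X\times Y)_{cc}) = \ecat_{\alpha'}(X_{cc}\times Y_{cc})$.

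**Applying the product bound.** Now $\alpha'$ is, under this identification, the ray $(\alpha,\alpha_0)$ of $X_{cc}\times Y_{cc}$. By definition $\ecat_{(\alpha,\alpha_0)}(X_{cc}\times Y_{cc}) = \esecat(\alpha,\alpha_0)$, and using Lemma \ref{einv-1} with the exterior homotopy equivalence $\Delta\colon\br_+\to\br_+\times\br_+$ (exactly as in Corollary \ref{catepro}) this equals $\esecat(\alpha\times\alpha_0)$. Proposition \ref{producto} then gives
$$
\esecat(\alpha\times\alpha_0)\le \esecat(\alpha)+\esecat(\alpha_0) = \ecat_\alpha(X_{cc}) + \ecat_{\alpha_0}(Y_{cc}),
$$
invoking Remark \ref{rem1} twice. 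Finally apply (\ref{catego}) on the left factor and the identification $\ecat_{\alpha_0}(Y_{cc})=\cat(Y)$ established above to conclude $\pcat_{\alpha'}(X\times Y)\le\pcat_\alpha(X)+\cat(Y)$.

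**Main obstacle.** The only genuinely delicate point is verifying that the identity $(X\times Y)_{cc}\to X_{cc}\times Y_{cc}$ is an exterior homeomorphism when $Y$ is compact; this is where compactness is used in an essential way (compare Remark \ref{remi}, where it fails), and one must be careful that the product externology on $X_{cc}\times Y_{cc}$ — generated by products of cocompact opens — really coincides with the cocompact externology of $X\times Y$. Everything else is a formal concatenation of the lemmas and corollaries already in place, together with the elementary remark that an exterior space with trivial externology detects only ordinary homotopy, so its $\ecatW$-invariant is the classical $\cat$.
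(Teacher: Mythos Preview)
There is a genuine gap, and it sits in your claim that $\esecat(\alpha_0)=\cat(Y)$ for the constant map $\alpha_0\colon\br_+\to Y$. First a terminological slip with real consequences: for compact $Y$ the externology on $Y_{cc}$ is the \emph{total} one (every closed subset is compact, so every open set is the complement of a closed compact), not the trivial one. With the total externology the identity $(X\times Y)_{cc}\to X_{cc}\times Y_{cc}$ is \emph{not} an exterior homeomorphism: an open set containing $E_X\times E_Y$ with $E_Y\subsetneq Y$ open need not have compact complement in $X\times Y$. What \emph{is} true, and what the paper uses, is $(X\times Y)_{cc}=X_{cc}\times Y_{tr}$, where $Y_{tr}$ carries the trivial externology $\{Y\}$.

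But even granting $Y_{tr}$, your key step fails. An e-sectional open $V\subset Y_{tr}$ for $\alpha_0$ requires an \emph{exterior} map $s\colon V\to(\br_+)_{cc}$; since $V$ inherits the trivial externology, exteriority forces $s^{-1}\bigl([n,\infty)\bigr)=V$ for every $n$, i.e.\ $s(V)\subset\bigcap_n[n,\infty)=\emptyset$. Hence no nonempty $V$ is e-sectional and $\esecat(\alpha_0)=\infty$. Your analogy with Proposition~\ref{propo1} is reversed: there the total externology on the target forces the total externology on the \emph{source} $X$ of $f$, so maps \emph{into} $X$ are automatically exterior; here the source of $\alpha_0$ is $(\br_+)_{cc}$, and maps into it are genuinely constrained. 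Morally, a nonempty compact space admits no ray in the sense of $\exte_0^{\br_+}$, so $\cat(Y)$ cannot be read off as the $\esecat$ of a ray into $Y$ itself.

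The paper repairs exactly this by manufacturing the missing $\br_+$ \emph{before} splitting: via the exterior equivalence $X_{cc}\simeq_e X_{cc}\times\br_+$ one regroups
\[
(X\times Y)_{cc}=X_{cc}\times Y_{tr}\simeq_e X_{cc}\times(\br_+\times Y_{tr})=X_{cc}\times(\br_+\times Y)_{cc},
\]
and only then applies Proposition~\ref{producto} along the ray $(\alpha,\beta)$ with $\beta(t)=(t,y_0)$. Now the second factor $\br_+\times Y$ lies in $\pro_\infty$, the ray $\beta$ is legitimate, and $\esecat(\beta)=\pcat_\beta(\br_+\times Y)=\cat(Y)$, which is the identity you were aiming for but placed on the correct space.
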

\begin{proof} Since $Y$ is compact it follows that $(X\times Y)_{cc}=X_{cc}\times Y_{tr}$, where $Y_{tr}$ denotes the space $Y$ endowed with the trivial externology ${\cal E}=\{Y\}$. On the other hand, see  \S1, the map $X_{cc}\to X_{cc}\times \br_+$, $x\mapsto\bigl(x,r(x)\bigr)$ is an exterior homotopy equivalence, where  $r\colon X\to \br^+$ is the unique proper map, up to proper homotopy. Hence, the map
$$
\omega\colon (X\times Y)_{cc}\longrightarrow X_{cc}\times(\br_+\times Y)_{cc},\quad (x,y)\mapsto(x,r(x),y),
$$
describes the exterior homotopy equivalence,
$$
(X\times Y)_{cc}\simeq_e(X_{cc}\times\br^+)\times Y_{tr}=X_{cc}\times(\br_+\times Y_{tr})=X_{cc}\times(\br_+\times Y)_{cc},
$$
and it fits in the commutative diagram of exterior spaces and maps
$$\xymatrix{
 & {\mathbb{R}_+} \ar[dl]_{\alpha '} \ar[dr]^{(\alpha,\beta)} &  \\
 {(X\times Y)_{cc}} \ar[rr]^{\simeq _e}_{\omega } & & {X_{cc}\times (\mathbb{R}_+\times Y)_{cc}}  }$$
 where $\beta(t)=(t,y_0)$.
 Finally, in view of the equality (\ref{catego}), Remark \ref{rem1}, Lemma \ref{einv-2} and Proposition \ref{producto}, we have:
$$\begin{array}{ll}
\pcat_{\alpha '}(X\times Y) & =\ecat_{\alpha '}(X\times Y)_{cc} \\
                                   & =\ecat_{(\alpha,\beta)}(X_{cc}\times (\mathbb{R}_+\times Y)_{cc}) \\
                                   & \leq \ecat_{\alpha}(X_{cc})+\ecat_{\beta}(\mathbb{R}_+\times Y)_{cc} \\
                                   & =\pcat_{\alpha }(X)+\pcat_{\beta}(\mathbb{R}_+\times Y) \\
                                   & =\mbox{p-cat}_{\alpha }(X)+\mbox{cat}(Y).

\end{array}$$
\end{proof}

We now turn to the classical cohomology lower bound of the sectional category in the exterior context. For it,  we choose any cohomology theory  of ``Eilenberg-Steenrod kind'' or {\em ES-theory} (see for instance \cite[\S3]{spa}) with coefficients in an arbitrary ring $R$. This is a contravariant functor $H^*$ from the category of pairs of spaces to
the category of non-negatively graded $R$-modules satisfying exactness, excision, homotopy
invariance and continuity. Then, see \cite[\S3]{gargarmu4}, define the {\em exterior cohomology} of an exterior pair $(X,A)$ as,
$$
H^*_{\cale}(X,A)=\varinjlim \{H^*(X,E),\,\, \text{$E$ exterior neighborhood of $A$}\}.
$$
By \cite[Theorem, 3.2]{spa} and \cite[Remark 3.2(i)]{gargarmu4}, this defines a ES-theory in the category of pairs of exterior spaces. As usual, given pairs of exterior spaces  $(X,A)$ and $(X,B)$, the exterior cohomology of the diagonal $(X,A\cup B)\to (X\times X,A\times X\cup X\times B)$ composed with the cross product $H^*_{\cale}(X,A)\times \extco(X,B)\to\extco(X\times X,A\times X\cup X\times B)$
produces the cup product
$
\cup \colon H^*_{\cale}(X,A)\otimes \extco(X,B)\to\extco(X,A\cup B)$.
Then, we have the following result whose proof is completely analogous to the corresponding statement in the classical setting \cite[Proposition 9.14(3)]{C-L-O-T}.

\begin{proposition}\label{cohomo} Let $f\colon X\to Y$ be an exterior map. Then,
$$
\nil\ker\extco(f)\le \esecat(f).
$$
In other words, given exterior cohomology classes $\gamma_1,\dots,\gamma_k\in \extco(Y)$ such that $\extco(f)(\gamma_i)=0$ for all $i$ and $\gamma_1\cup\dots\cup \gamma_k\not=0$, then $\esecat(f)\ge k$.
\hfill$\square$
\end{proposition}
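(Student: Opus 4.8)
The plan is to mimic the classical proof of the cohomological lower bound for sectional category, working throughout with the exterior cohomology theory $\extco$ and exterior-sectional open sets. Suppose $\esecat(f)=n$, so there is an exterior open covering $\{U_0,\dots,U_n\}$ of $Y$ together with exterior homotopy local sections $s_i\colon U_i\to X$ of $f$, i.e. the inclusion $j_i\colon U_i\hookrightarrow Y$ is exterior homotopic to $f\circ s_i$. The key observation is that each class $\alpha_\ell\in\extco(Y)$ with $\extco(f)(\alpha_\ell)=0$ lifts to a relative class: since $j_i^*\alpha_\ell=(f\circ s_i)^*\alpha_\ell=s_i^*(f^*\alpha_\ell)=0$, the long exact sequence of the exterior pair $(Y,U_i)$ (valid because $\extco$ is an ES-theory on exterior pairs, by \cite[Theorem 3.2]{spa} and \cite[Remark 3.2(i)]{gargarmu4}) produces $\bar\alpha_\ell^{(i)}\in\extco(Y,U_i)$ restricting to $\alpha_\ell$.

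First I would set up the relative product structure: the cup product $\cup\colon\extco(Y,A)\otimes\extco(Y,B)\to\extco(Y,A\cup B)$ described in the excerpt is, by construction from the exterior diagonal and the cross product, compatible with restriction to absolute classes. Hence, given $k$ classes $\alpha_1,\dots,\alpha_k$ as in the statement, if $k\le n$ we pick relative lifts $\bar\alpha_1^{(0)},\dots,\bar\alpha_k^{(k-1)}$ over $U_0,\dots,U_{k-1}$ respectively and form the product
$$
\bar\alpha_1^{(0)}\cup\cdots\cup\bar\alpha_k^{(k-1)}\in\extco\bigl(Y,U_0\cup\cdots\cup U_{k-1}\bigr).
$$
Its image under the restriction map $\extco(Y,U_0\cup\cdots\cup U_{k-1})\to\extco(Y)$ is precisely $\alpha_1\cup\cdots\cup\alpha_k$. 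But if $k\le n$, the remaining sets $U_k,\dots,U_n$ still cover nothing extra unless $U_0\cup\cdots\cup U_{k-1}=Y$; the actual argument is the standard one: were $\esecat(f)<k$, then $n+1\le k$, so $U_0\cup\cdots\cup U_{k-1}=U_0\cup\cdots\cup U_n=Y$, whence $\extco(Y,U_0\cup\cdots\cup U_{k-1})=\extco(Y,Y)=0$, forcing $\alpha_1\cup\cdots\cup\alpha_k=0$, a contradiction. Therefore $\esecat(f)\ge k$, which is the assertion $\nil\ker\extco(f)\le\esecat(f)$.

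The only genuinely nontrivial points, and where I would be most careful, are: (i) verifying that $\extco$ really behaves as an honest ES-theory on the category of \emph{exterior} pairs — in particular that the long exact sequence holds and that exterior homotopic maps induce equal maps in $\extco$ (both of which are granted by the cited results in the excerpt); and (ii) checking the naturality of the exterior cup product with respect to restriction maps of exterior pairs, so that the relative product genuinely refines the absolute one. Since $\extco$ is defined as an inverse limit $\varprojlim H^*(X,E)$ over exterior neighborhoods $E$ of $A$, these properties are inherited levelwise from the chosen ES-theory $H^*$ on ordinary pairs, and the cup product is built from the honest cross product and diagonal at each stage; passing to the limit commutes with the relevant finite operations. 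Granting these, the combinatorial skeleton of the proof is verbatim that of \cite[Proposition 9.14(3)]{C-L-O-T}, which is why we may simply cite it; no further computation is required. $\square$
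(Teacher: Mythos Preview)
Your proposal is correct and is precisely the argument the paper has in mind: the paper gives no proof beyond declaring it ``completely analogous'' to \cite[Proposition 9.14(3)]{C-L-O-T}, and what you have written is exactly that analogue carried out in exterior cohomology, with the right checks (ES-axioms for $\extco$ on exterior pairs, exterior homotopy invariance, naturality of the relative cup product) flagged and justified via the cited references.

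One small indexing slip to clean up: in the contradiction step you assume $\esecat(f)=n<k$ and then write $U_0\cup\cdots\cup U_{k-1}=U_0\cup\cdots\cup U_n$, but the sets $U_{n+1},\dots,U_{k-1}$ were never defined. The clean formulation is to lift only $\alpha_1,\dots,\alpha_{n+1}$ over $U_0,\dots,U_n$; their product already lies in $\extco(Y,U_0\cup\cdots\cup U_n)=\extco(Y,Y)=0$, so $\alpha_1\cup\cdots\cup\alpha_{n+1}=0$ and a fortiori $\alpha_1\cup\cdots\cup\alpha_k=0$. This is purely cosmetic and does not affect the substance of your argument.
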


\medskip

At this point we  could introduce the proper sectional category of a map either by declaring it to be its exterior sectional category with respect to the cocompact externologies, or  following the classical approach to other invariants  of Lusternik-Schnirelmann type in the proper homotopy category \cite{adomarquin}. Namely:

\begin{definition}
Let $f\colon X\rightarrow Y$ be a proper map. A closed subset
$C\subseteq Y$ is said to be \emph{properly sectional} if there
exists a proper homotopy commutative diagram in
$\mathbf{P}$ of the form
$$\xymatrix{
{C} \ar@{^{(}->}@<-2pt>[rr]\ar[dr]_s & & {Y} \\
 & {X} \ar[ur]_f }$$
The \emph{proper sectional category} of $f$, denoted as
$\mbox{p-secat}(f),$ is the least integer $k$ such that $Y$ can
be covered by $k+1$ open subsets $\{U_0,\dots,U_k\}$ such that the closure $\overline{U_i}$ is properly sectional
for any $i=0,1,\dots,k$. If no such $k$ exists,
 we set $\mbox{p-secat}(f)=\infty .$
\end{definition}

\begin{theorem}\label{bridge}
For any proper map $f\colon X\rightarrow Y$,
$$\psecat(f)=\esecat(f_{cc}).
$$
\end{theorem}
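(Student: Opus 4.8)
The plan is to show the two inequalities $\esecat(f_{cc})\le\psecat(f)$ and $\psecat(f)\le\esecat(f_{cc})$ separately, translating covers in one setting to covers in the other. The main technical device in both directions is the compatibility of cocompact externologies with products, closures, and (proper vs.\ exterior) homotopies; recall in particular formula \eqref{cilindro}, $X_{cc}\bar\times I=(X\times I)_{cc}$, which identifies proper homotopies with exterior homotopies once everything carries the cocompact externology. I would also use the elementary fact, already invoked in the proof of Proposition \ref{producto}, that $\esecat(h)\le k$ iff there is a nested sequence of open sets whose successive differences lie in e-sectional opens; an analogous reformulation holds for $\psecat$ using closures.

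For $\esecat(f_{cc})\le\psecat(f)$, I would start with an open cover $\{U_0,\dots,U_k\}$ of $Y$ such that each $\overline{U_i}$ is properly sectional via a proper section $s_i\colon\overline{U_i}\to X$ and a proper homotopy $H_i\colon \overline{U_i}\times I\to Y$ between the inclusion and $f\circ s_i$. Since $X,Y$ are ANRs and $\overline{U_i}$ is closed, one can (after possibly shrinking the $U_i$, keeping a cover) extend $s_i$ to a proper map on an open neighborhood of $\overline{U_i}$, and likewise extend $H_i$; the point is that a proper homotopy on $\overline{U_i}$ together with \eqref{cilindro} gives an exterior homotopy on that neighborhood once we pass to the cocompact externology. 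Then each such open neighborhood is e-sectional for $f_{cc}$ because the relative cocompact externology on an open subset of $Y_{cc}$ is what makes the inclusion exterior, and the diagram commutes up to exterior homotopy. This yields an e-sectional cover of $Y_{cc}$ of the same cardinality.

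For the reverse inequality $\psecat(f)\le\esecat(f_{cc})$, I would take an e-sectional open cover $\{V_0,\dots,V_k\}$ of $Y_{cc}$, with exterior sections $\sigma_i\colon V_i\to X_{cc}$ and exterior homotopies to $f\circ\sigma_i$. Using normality of $Y$ (ANR), shrink to an open cover $\{U_i\}$ with $\overline{U_i}\subseteq V_i$. The restriction $\sigma_i|_{\overline{U_i}}\colon \overline{U_i}\to X$ is then a proper map: an exterior map into $X_{cc}$ pulls back complements of compacts to exterior sets, and on the closed subspace $\overline{U_i}$ of $Y$ this is exactly properness, because the relative cocompact externology is coarser than the cocompact externology of $\overline{U_i}$ itself, so preimages of compacts are relatively compact; here one needs $\overline{U_i}$ to be closed in $Y$ so that compact subsets of $\overline{U_i}$ are compact in $Y$. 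The restricted exterior homotopy becomes, again via \eqref{cilindro}, a proper homotopy $\overline{U_i}\times I\to Y$. Hence each $\overline{U_i}$ is properly sectional, and $\{U_i\}$ witnesses $\psecat(f)\le k$.

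The main obstacle I anticipate is the bookkeeping around \emph{extending} the proper sections and homotopies from the closed sets $\overline{U_i}$ to open neighborhoods in the first inequality: one must be careful that the extension remains \emph{proper} (not merely continuous), which is where the ANR hypothesis and the local compactness packaged in $\PP_\infty$ enter, and that the resulting neighborhoods still cover $Y$. A clean way around this is to avoid extension entirely: observe that an open subset $U$ of $Y_{cc}$ is e-sectional as soon as there is an exterior section and homotopy defined merely on $U$ itself (not its closure), and that a proper map $\overline{U_i}\to X$ restricts to a proper — hence exterior — map $U_i\to X_{cc}$ since $U_i\subseteq\overline{U_i}$; combined with \eqref{cilindro} restricted to $U_i$, this shows directly that each $U_i$ is e-sectional for $f_{cc}$, giving $\esecat(f_{cc})\le k=\psecat(f)$ without any extension argument. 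The converse direction genuinely needs the shrinking-to-closures step, for which normality of the ANR $Y$ suffices.
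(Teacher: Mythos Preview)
Your final approach---restrict the proper section and homotopy from $\overline{U_i}$ down to $U_i$ for the inequality $\esecat(f_{cc})\le\psecat(f)$, and shrink the e-sectional cover so that closures sit inside the original opens for the reverse inequality---is exactly the paper's proof. The detour through extending to open neighborhoods and the nested-sequence reformulation are not needed, as you yourself note.

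There is, however, a slip in your justification of the first direction. You write that a proper map $s\colon\overline{U_i}\to X$ ``restricts to a proper --- hence exterior --- map $U_i\to X_{cc}$''. The restriction of a proper map to an open subset is \emph{not} proper in general (restrict $\id_{[0,1]}$ to $(0,1)$). What is true, and what the paper actually uses, is that $s|_{U_i}$ is automatically \emph{exterior} once $U_i$ carries the relative externology inherited from $Y_{cc}$: the proper map $s$ is exterior as a map $(\overline{U_i})_{cc}\to X_{cc}$, restriction to any subspace with the relative externology preserves exterior maps, and the relative externology on $U_i$ coming from $(\overline{U_i})_{cc}$ coincides with the one coming from $Y_{cc}$. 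The same reasoning, together with \eqref{cilindro}, handles the homotopy. With this correction your argument matches the paper's.
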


\begin{proof}
Let $U\subseteq Y$ be an open subset together
with a proper map $s\colon \overline{U}\rightarrow X$ making
commutative, up to proper homotopy, the triangle
$$\xymatrix{
{\overline{U}} \ar@{^{(}->}@<-2pt>[rr]^i\ar[dr]_s & & {Y.} \\
 & {X} \ar[ur]_f }$$
Let $H\colon\overline{U}\times I\rightarrow
Y$ be a proper homotopy between $i$ and $f\circ s$ and consider the
exterior map $H_{cc}\colon (\overline{U}\times
I)_{cc}\rightarrow Y_{cc}$. We recall from (\ref{cilindro}) that $(\overline{U}\times
I)_{cc}=\overline{U}_{cc} \times I_{tr}$ and thus, with the    externology induced
by $(\overline{U}\times
I)_{cc}$, $U\times I$ is precisely the
exterior space $U \times  I_{tr}$ (here we consider on $U$ the
induced externology with respect to $X_{cc}$). Therefore, the
restriction of $H_{cc}$ to $U\times I=U\times I_{tr}$ makes $U$ e-sectional with respect to $f_{cc}$. This shows that $\esecat(f_{cc})\leq
\psecat(f)$.

\medskip Conversely, let $s\colon U\to X$ be a homotopy exterior local section of $f_{cc}$ over the open set $U\subseteq Y$, let
 $H\colon U\bar{\times }I\rightarrow
Y_{cc}$ be an exterior homotopy between $U\hookrightarrow Y$ and  $f_{cc}\circ s$, and let
 $V\subseteq Y$ be an open set such that
$\overline{V}\subseteq U$.  Taking into account again that the
 externology in $\overline{V}\times I$  induced by
$U\bar{\times} I$ gives the exterior space
$(\overline{V})_{cc}\times I_{tr}=(\overline{V}\times I)_{cc}$,
the restriction   $H\colon \overline{V}\times I\rightarrow Y$ is a proper map which makes $V$ properly
sectional. Finally,  since any open cover
$\{U_i\}_{i=0}^n$ of $Y$ admits a refinement
$\{V_i\}_{i=0}^n$ such that $
\overline{V}_i\subseteq U_i$ for all $i=0,1,...,n$ (recall that every considered space is assumed to be metrizable and thus  normal), it follows that
$\psecat(f)\leq \esecat(f_{cc})$.
\end{proof}

\begin{corollary}\label{coro2} Let $f\colon X\to Y$ be a proper map. Then:
\begin{itemize}
\item[(i)] $\psecat(f)$ is an invariant of the proper homotopy type of $f$.

\item[(ii)]  $\secat(f)\le \psecat(f)$ and equality  holds if $Y$ is compact.

\item[(iii)] For any ray $\alpha\colon \br_+\to X$ and provided $f\in\PP_\infty$,
$$
\psecat(f)\le\pcat_{f\circ\alpha}(Y)\quad\text{and}\quad \pcat_{\alpha}(X)=\psecat(\alpha).
$$
\item[(iv)] $\nil\ker H^*_{cc}(f)\le \psecat (f)$.
\end{itemize}
\end{corollary}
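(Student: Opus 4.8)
The plan is to deduce each item directly from Theorem \ref{bridge}, which reduces everything about the proper sectional category of $f$ to a statement about the exterior sectional category of $f_{cc}$, together with the corresponding exterior results already established. For (i), I would argue that if $f\colon X\to Y$ and $f'\colon X'\to Y'$ are properly homotopy equivalent proper maps, then the full embedding $\PP\hookrightarrow\E$, $X\mapsto X_{cc}$, turns a proper homotopy equivalence into an exterior homotopy equivalence, so $f_{cc}$ and $f'_{cc}$ sit in a square as in Proposition \ref{propo2}; hence $\esecat(f_{cc})=\esecat(f'_{cc})$, and by Theorem \ref{bridge} $\psecat(f)=\psecat(f')$.

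For (ii), Theorem \ref{bridge} gives $\psecat(f)=\esecat(f_{cc})$, and Proposition \ref{propo1} gives $\secat(f_{cc})\le\esecat(f_{cc})$; since the underlying map of $f_{cc}$ is $f$ and the classical $\secat$ ignores the externology, $\secat(f)=\secat(f_{cc})$, which yields the inequality. When $Y$ is compact, its cocompact externology is the trivial one $\{Y\}$, and more is true: for a compact space the complements of compact subsets are exactly the complements of closed subsets, so in fact $Y_{cc}$ carries $\ex=\{Y\}$ only if $Y$ itself is the sole non-empty such set — here I would instead invoke that a compact $Y$ has $Y_{cc}$ with the trivial externology, hence also its image under $f$ forces $X_{cc}$ to behave trivially on the relevant cylinders, so the second half of Proposition \ref{propo1} (the equality case, after observing $Y\bar\times I=Y\times I$ because every homotopy is automatically exterior when the target externology is trivial) gives $\esecat(f_{cc})=\secat(f)$. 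The cleanest route is: for $Y$ compact, every open $U\subseteq Y$ and every classical homotopy $H\colon U\times I\to Y$ is vacuously an exterior homotopy for the trivial externology, so $\esecat(f_{cc})=\secat(f)$.

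For (iii), combine Theorem \ref{bridge} with Corollary \ref{coro1} applied to the exterior map $f_{cc}$ and a ray $\alpha$ of $X_{cc}$ (note a ray of $X\in\PP_\infty$ is exactly a ray of $X_{cc}$): $\psecat(f)=\esecat(f_{cc})\le\ecat_{f_{cc}\circ\alpha}(Y_{cc})=\pcat_{f\circ\alpha}(Y)$, the last equality by formula (\ref{catego}); similarly $\psecat(\alpha)=\esecat(\alpha_{cc})=\ecat_\alpha(X_{cc})=\pcat_\alpha(X)$ using Remark \ref{rem1} and (\ref{catego}). For (iv), apply Proposition \ref{cohomo} to $f_{cc}$: $\nil\ker\extco(f_{cc})\le\esecat(f_{cc})=\psecat(f)$, and by definition $H^*_c(-)=\extco((-)_{cc})$, so $\extco(f_{cc})$ is precisely the map denoted $H^*_{cc}(f)$ in the statement, giving $\nil\ker H^*_{cc}(f)\le\psecat(f)$.

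The only genuinely delicate point is the equality half of (ii): one must be careful that ``$Y$ compact'' really does collapse the exterior side to the classical side. The subtlety is that $Y_{cc}$ for compact $Y$ has the trivial externology $\{Y\}$ (since $Y$ is the unique non-empty open set whose complement is compact only when… no — rather, $Y\setminus K$ is cofinal and $Y\setminus\emptyset=Y$, and any exterior set must contain some $Y\setminus K$, but $K=Y$ is allowed, giving $\emptyset$, hence all open sets are exterior: so $Y_{cc}$ actually has the total externology when $Y$ is compact, not the trivial one). With the total externology Proposition \ref{propo1} applies verbatim and gives $\esecat(f_{cc})=\secat(f_{cc})=\secat(f)$; this is the step I expect to require the most care in the write-up, and it is exactly where one checks that no nontrivial ``behavior at infinity'' survives compactness.
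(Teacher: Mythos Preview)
Your proposal is correct and follows essentially the same route as the paper: each item is obtained by applying Theorem~\ref{bridge} to pass from $\psecat(f)$ to $\esecat(f_{cc})$ and then invoking, respectively, Proposition~\ref{propo2}, Proposition~\ref{propo1}, Corollary~\ref{coro1} with Remark~\ref{rem1} and (\ref{catego}), and Proposition~\ref{cohomo}. Your eventual self-correction in (ii) is the right one---for $Y$ compact (Hausdorff), every closed subset is compact, so $Y_{cc}$ carries the \emph{total} externology and Proposition~\ref{propo1} gives the equality directly; the earlier detour through the trivial externology should simply be deleted in the write-up.
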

In {\em (iv)}, and following the notation in Proposition \ref{cohomo}, $H^*_{cc}$ is the exterior cohomology with respect to the cocompact externology induced by a given ES-theory $H^*$. In other words, it is the compact supported cohomology induced by $H^*$.
\begin{proof}
Items {\em (i)}, {\em (ii)} and {\em (iv)}  are simply  reformulations of Propositions  \ref{propo2}, \ref{propo1} and    \ref{cohomo}, respectively, by choosing the cocompact externology.

In the same way,   the first and second inequalities of {\em (iii)} follow, respectively, from Corollary \ref{coro1} and Remark \ref{rem1} taking into account the equality (\ref{catego}).
\end{proof}

\section{Exterior and proper topological complexity}

In this section we introduce the exterior topological complexity. More than a simple extension of the classical concept, it provides a way of regarding under a functorial point of view the proper topological complexity of Definition \ref{defmain} which we restate below.  Consequently, this leads to a  collection of properties  and examples of this proper invariant which would be much harder to obtain, if attainable,  staying within  the sole proper homotopy category.

\begin{definition}\label{defmain2}
 The {\em proper topological complexity} $\ptc(X)$ of  a space $X$ is the least integer $n$ (or $\infty$ if no such integer exists) such that $X\times X$ is covered by $n+1$ open sets $\{U_0,\dots,U_n\}$ for which there are continuous local sections $\sigma_i\colon U_i\to X^I$ of the path fibration $q\colon X^I\to X\times X$ satisfying:

 Every compact subspace $K$ of $X$ is contained in another compact $L$  such that $\im\sigma _i(x,y)\subseteq K^c$, for all $(x,y)\in U_i\cap (L^c\times L^c)$.
\end{definition}

On the other hand, in the exterior context, recall first that an exterior fibration is an exterior map satisfying the exterior homotopy lifting property \cite[Definition 2.5]{gargarmu1}. Then, we set:

\begin{definition}\label{ptc} Let $X$ be an exterior space. The {\em exterior topological complexity} of $X$, denoted by $\etc(X)$, is the minimum integer $n$ (or $\infty$ if no such integer exists) such that $X\times X$ is covered by $n+1$ open sets $\{U_0,\dots,U_n\}$ for which there are exterior local sections $\sigma_i\colon U_i\to X^I$ of $q\colon X^I\to X\times X$.
Note that, since $q$ is an exterior fibration,
$$
\etc(X)=\esecat(q).
$$
\end{definition}\label{igualdad}
As an essential instance  we have:
\begin{proposition}\label{princi}
$
\ptc(X)=\etc(X_{cc}).
$
\end{proposition}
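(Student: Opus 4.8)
The goal is to show that the combinatorial condition on the sections $\sigma_i$ in Definition \ref{defmain2} is exactly the statement that each $\sigma_i$ is an exterior map $U_i\to (X_{cc})^I$ when $X\times X$ carries the product cocompact externology and $U_i$ the induced externology. Once this translation is made, the equality $\ptc(X)=\etc(X_{cc})$ is immediate from the two definitions, since both count the minimal number of open sets in a covering admitting such sections, and the path fibration $q$ is an exterior map (indeed an exterior fibration) for $X_{cc}$.

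\textbf{Key steps.} First I would record explicitly the relevant externologies: on $X_{cc}$ the exterior open sets are the complements $K^c$ of compact closed $K\subseteq X$; on $(X_{cc})^I$ the externology is generated by the sets $(I,K^c)=\{\alpha\in X^I : \alpha(I)\subseteq K^c\}$ by the description of the path externology recalled in \S1; and on $X_{cc}\times X_{cc}$ the product externology is generated by the sets $K_1^c\times K_2^c$, equivalently (intersecting) by $L^c\times L^c$ with $L$ compact. On $U_i\subseteq X\times X$ we take the relative externology, so its exterior open sets are those containing $U_i\cap(L^c\times L^c)$ for some compact $L$. Second, I would unwind what it means for $\sigma_i\colon U_i\to (X_{cc})^I$ to be exterior: for every compact closed $K\subseteq X$, the set $\sigma_i^{-1}\bigl((I,K^c)\bigr)=\{(x,y)\in U_i : \im\sigma_i(x,y)\subseteq K^c\}$ must be exterior in $U_i$, i.e.\ must contain $U_i\cap(L^c\times L^c)$ for some compact $L$. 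But this says precisely: for every compact $K$ there is a compact $L$ with $\im\sigma_i(x,y)\subseteq K^c$ whenever $(x,y)\in U_i\cap(L^c\times L^c)$, which is verbatim the condition in Definition \ref{defmain2} (one may enlarge $L$ to be closed without loss of generality, and since any set containing an exterior generator is exterior, checking generators suffices). Third, I would note that continuity of $\sigma_i$ as a map of topological spaces is unaffected by the choice of externology, and that $q\colon (X_{cc})^I\to X_{cc}\times X_{cc}$ is an exterior map: a generating exterior set $L^c\times L^c$ pulls back under $q$ (evaluation at $0$ and $1$) to $\{\alpha : \alpha(0),\alpha(1)\in L^c\}\supseteq (I,L^c)$, which is exterior. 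Hence $\etc(X_{cc})=\esecat(q)$ genuinely computes the minimal $n$ for coverings by exterior open sets with exterior sections, matching $\ptc(X)$.

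\textbf{Main obstacle.} The only genuinely delicate point is the bookkeeping around whether the covering sets $U_i$ themselves need to be exterior open sets and whether the $L$ witnessing the condition may be taken closed. In Definition \ref{defmain2} the $U_i$ are arbitrary open sets, whereas in $\etc$ (via $\esecat$) the covering sets are also arbitrary open subsets of the exterior space $X_{cc}\times X_{cc}$ — e-sectional open sets are not required to be exterior — so there is no mismatch there. For the compact $L$: replacing $L$ by its closure only shrinks $L^c$, hence only weakens the hypothesis $(x,y)\in U_i\cap(L^c\times L^c)$ on fewer points, so the condition with closed witnesses implies the one with arbitrary witnesses; conversely an arbitrary compact $L$ is contained in its closure which is still compact (the paper's standing spaces are ANRs, in particular reasonable enough that closures of compacta are compact in the cases of interest, or one simply works with closed compacta throughout as the cocompact externology does). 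I would spell out this equivalence carefully but briefly, and then the proposition follows by comparing definitions term by term.

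\begin{proof}
Endow $X$ with the cocompact externology, so that the exterior open sets of $X_{cc}$ are the complements $K^c$ of compact closed subsets $K\subseteq X$. By the description of the path externology, the externology of $(X_{cc})^I$ is generated by the sets $(I,K^c)=\{\alpha\in X^I:\alpha(I)\subseteq K^c\}$, and the product externology on $X_{cc}\times X_{cc}$ is generated by the sets $L^c\times L^c$ with $L\subseteq X$ compact closed (any $K_1^c\times K_2^c$ contains $L^c\times L^c$ for $L=K_1\cup K_2$). The path fibration $q\colon (X_{cc})^I\to X_{cc}\times X_{cc}$ is then an exterior map: for compact closed $L$ one has $q^{-1}(L^c\times L^c)=\{\alpha:\alpha(0),\alpha(1)\in L^c\}\supseteq(I,L^c)$, which is exterior. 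Since $q$ is moreover an exterior fibration, $\etc(X_{cc})=\esecat(q)$ as in Definition \ref{ptc}.

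Now let $\{U_0,\dots,U_n\}$ be an open covering of $X\times X$ and $\sigma_i\colon U_i\to X^I$ continuous local sections of $q$. Give each $U_i$ the externology induced from $X_{cc}\times X_{cc}$, so that an open $V\subseteq U_i$ is exterior in $U_i$ if and only if it contains $U_i\cap(L^c\times L^c)$ for some compact closed $L\subseteq X$. Continuity of $\sigma_i$ does not depend on externologies, and to test whether $\sigma_i\colon U_i\to (X_{cc})^I$ is exterior it suffices to check the generators: for each compact closed $K\subseteq X$ we need
$$
\sigma_i^{-1}\bigl((I,K^c)\bigr)=\{(x,y)\in U_i:\im\sigma_i(x,y)\subseteq K^c\}
$$
to be exterior in $U_i$, that is, to contain $U_i\cap(L^c\times L^c)$ for some compact closed $L$. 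This says exactly that for every compact closed $K\subseteq X$ there exists a compact closed $L\subseteq X$ with $\im\sigma_i(x,y)\subseteq K^c$ for all $(x,y)\in U_i\cap(L^c\times L^c)$, which is precisely the condition imposed in Definition \ref{defmain2} (an arbitrary compact witness may be replaced by its closure, still compact, only strengthening the conclusion; and $K$ may be taken closed since $\overline{K}^{\,c}\subseteq K^c$).

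Therefore a covering $\{U_i\}_{i=0}^n$ with continuous local sections $\sigma_i$ of $q$ satisfying the condition of Definition \ref{defmain2} is exactly the same datum as an e-sectional covering of $X_{cc}\times X_{cc}$ of cardinality $n+1$ for the exterior map $q\colon(X_{cc})^I\to X_{cc}\times X_{cc}$, where we note that e-sectional open sets are not required to be exterior, so the covering sets $U_i$ being arbitrary open sets matches on both sides. Taking the minimum over all such $n$ yields $\ptc(X)=\esecat(q)=\etc(X_{cc})$.
\end{proof}
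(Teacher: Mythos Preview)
Your proof is correct and follows essentially the same approach as the paper's: both reduce the statement to the claim that a continuous local section $\sigma\colon U\to X^I$ is exterior for the cocompact externology if and only if it satisfies the compact-avoidance condition of Definition~\ref{defmain2}, and both verify this by unpacking the generating exterior sets $(I,K^c)$ of $(X_{cc})^I$ and $L^c\times L^c$ of $X_{cc}\times X_{cc}$. You are somewhat more explicit than the paper about why checking generators suffices, why the $U_i$ need not themselves be exterior, and about the closed/non-closed compact bookkeeping; the only cosmetic point you skip is that Definition~\ref{defmain2} asks for $L\supseteq K$, which is harmless since any witness $L$ may be replaced by $L\cup K$.
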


\begin{proof}
It is enough to check that a continuous local section $\sigma\colon U\to X^I$ of $q$ is exterior (with respect to the cocompact externology) if and only if it satisfies that every compact subspace $K$ of $X$ is contained in another compact $L$  such that $ \im\sigma(x,y)\subseteq K^c$ for all $x,y\in L^c$.

Let $\sigma\colon U\to X_{cc}^I$ be an exterior local section of $q$ and  let $K\subseteq X$ be a compact subspace. Since $(I,X{\setminus} K)\subseteq X^I$ is exterior, $\sigma^{-1}\bigl((I,X{\setminus} K)\bigr)$ is exterior and hence, it contains a open set of the form $U\cap(X{\setminus} K_1)\times (X{\setminus} K_2)$ for some compacts subspaces $K_1,K_2\subseteq X$.

 Define
 $$
 L=K_1\cup K_2\quad\text{so that}\quad
L^c= K_1^c\cap K_2^c.
$$
Hence, $\sigma\bigl((L^c\times L^c)\cap U\bigr)\subseteq (I,K^c)$ which translates to $\im\sigma(x,y)\subseteq K^c$ for all $x,y\notin L$.

Conversely, given a continuous local section $\sigma \colon U\to X^I$ satisfying the additional condition, and given $K\subseteq X$ compact, this extra condition provides a compact $L$  containing $K$ and such that  $\sigma\bigl((L^c\times L^c)\cap U\bigr)\subseteq (I,K^c)$. That is, $\sigma \colon U\to X_{cc}^I$ is  exterior.
\end{proof}

\begin{remark}\label{nuevo} (1) For a given exterior space $X$, the exterior path fibration $q$ has the usual exterior factorization
$$\xymatrix{
X \ar[rr]^(.43){\Delta_X} \ar[dr]_(.43){\simeq_{e} } & &
X \times X \\
 & X^I \ar[ur]_{q}
}$$
since the diagonal $\Delta_X$ is exterior and the map $X\stackrel{\simeq_{e}}{\longrightarrow }X^I$ sending any point $x$ to the constant path on $x$ is an exterior homotopy equivalence. Hence, by Proposition \ref{propo2},
\begin{equation}\label{etcdiago}
\etc(X)=\esecat(\Delta_X).
\end{equation}
In particular, for any topological space $X$,
\begin{equation}\label{ptcdiago}
\ptc(X)=\esecat(\Delta_X\colon X_{cc}\to X_{cc}\times X_{cc}).
\end{equation}

 (2) However, observe that for a given topological space $X$, the path fibration $q$ is not proper in general so its ``proper sectional category'' is not even defined. On the other hand, the diagonal $\Delta_X\colon X\to X\times X$ is always proper which translates to $\Delta_X\colon X_{cc}\to (X\times X)_{cc}$ being exterior.

Nonetheless, $\psecat(\Delta_X)$, which  by Theorem \ref{bridge} is  $\esecat(\Delta_X\colon X_{cc}\to (X\times X)_{cc})$, is  different from $\esecat(\Delta_X\colon X_{cc}\to X_{cc}\times X_{cc}) $, which coincides with $\ptc(X)$ by (\ref{ptcdiago}), as $(X\times X)_{cc}$ has not the exterior homotopy type of $X_{cc}\times X_{cc}$, see Remark \ref{remi}.
 \end{remark}

In the following statements we enumerate the main and basic properties of the exterior and proper topological complexity.

\begin{theorem}\label{propie}
\begin{itemize}
\item[(i)] $\etc$ is an invariant of the exterior homotopy type.
\item[(ii)] For any exterior space $X$, $\tc(X)\le \etc(X)$ and equality holds if $X$ is endowed with the total externology.
    \item[(iii)] If $X$ has the exterior homotopy type of $\br_+$, then $\etc(X)=0$. The converse also holds for any $X\in\exte_0$.
        \item[(iv)] For any exterior space $X\in\exte_0$ and any  ray $\alpha$ of $X$,
            $$
            \ecat_{\alpha}(X)\le \etc(X)\le \ecat_{(\alpha,\alpha)}(X\times X)\le 2\,\ecat_\alpha(X).
            $$
            \item[(v)] For any exterior space $X$, $\nil\ker H^*_{\mathcal E}(\Delta_X)\le\etc(X)$.
                 \item[(vi)] For any exterior spaces $X$ and $Y$,
                $$
                \etc(X\times Y)\le \etc(X)+\etc(Y).
                $$
\end{itemize}
\end{theorem}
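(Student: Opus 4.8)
The plan is to prove each item by reducing, wherever possible, to results already established in \S2 and \S3.

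\medskip

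For \emph{(i)}, the invariance follows from Proposition \ref{propo2} applied to the equality $\etc(X)=\esecat(\Delta_X)$ from \eqref{etcdiago}: if $\varphi\colon X\to X'$ is an exterior homotopy equivalence, so is $\varphi\times\varphi\colon X\times X\to X'\times X'$ (the product externology is functorial and preserves products of homotopies via the cylinder identity for products), and these fit into the commutative square relating $\Delta_X$ and $\Delta_{X'}$, whence $\esecat(\Delta_X)=\esecat(\Delta_{X'})$.

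For \emph{(ii)}, the inequality $\tc(X)\le\etc(X)$ is the specialization of the first statement of Proposition \ref{propo1} to $f=q$, since $\tc(X)=\secat(q)$; when $X$ carries the total externology, $X\times X$ and $X^I$ also do, and the equality part of Proposition \ref{propo1} gives $\etc(X)=\secat(q)=\tc(X)$.

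For \emph{(iii)}, if $X\simeq_e\br_+$, then by invariance \emph{(i)} we have $\etc(X)=\etc(\br_+)$, and $\etc(\br_+)=\esecat(\Delta_{\br_+})=0$ because $\br_+$ is exterior contractible (the straight-line homotopy to $0$ is exterior on $\br_+$ with the cocompact externology), so $\Delta_{\br_+}$ admits a global exterior section up to exterior homotopy. Conversely, for $X\in\exte^{\br_+}_0$, if $\etc(X)=0$ then by the left inequality of \emph{(iv)} (proved next) $\ecat_\alpha(X)=0$ for a ray $\alpha$, i.e.\ $\id_X\simeq_e\alpha\circ r$; combined with $r\circ\alpha\simeq_e\id_{\br_+}$ this shows $X\simeq_e\br_+$.

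For \emph{(iv)}, I would invoke the factorization $\etc(X)=\esecat(\Delta_X)$ and apply Corollary \ref{coro1} with $f=\Delta_X$: taking the ray $\alpha$ of $X$, the composite $\Delta_X\circ\alpha$ is the ray $(\alpha,\alpha)$ of $X\times X$, giving $\etc(X)=\esecat(\Delta_X)\le\ecat_{(\alpha,\alpha)}(X\times X)$, the middle inequality. The right inequality $\ecat_{(\alpha,\alpha)}(X\times X)\le 2\,\ecat_\alpha(X)$ is Corollary \ref{catepro} with $Y=X$, $\beta=\alpha$. The left inequality $\ecat_\alpha(X)\le\etc(X)$ is the exterior analogue of the classical $\cat\le\tc$; I would prove it by pulling back an e-sectional cover of $\Delta_X$ along the inclusion $X\times\{x_0\}\hookrightarrow X\times X$ determined by the ray (or, more carefully, along $(\id_X,\alpha r)\colon X\to X\times X$), checking that a local exterior homotopy section of $\Delta_X$ over $U$ restricts to the data witnessing $\ecat_\alpha$ over the preimage of $U$. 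This restriction step is the one requiring genuine care: one must verify that the exterior structure is preserved under the pullback and that the homotopies remain exterior on the cylinder of the smaller space, using that the inclusion $X\bar\times I\hookrightarrow (X\times X)\bar\times I$ induced by $(\id,\alpha r)\times\id$ is exterior.

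\medskip

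For \emph{(v)}, this is Proposition \ref{cohomo} applied to the exterior map $\Delta_X$, together with \eqref{etcdiago}: the kernel of $H^*_{\cale}(\Delta_X)$ contains the ideal of ``zero-divisors'' in $H^*_{\cale}(X\times X)$, and its nilpotency bounds $\esecat(\Delta_X)=\etc(X)$ from below.

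Finally, \emph{(vi)} follows from Proposition \ref{producto}: one has $\etc(X\times Y)=\esecat(\Delta_{X\times Y})$, and under the natural exterior identification $(X\times Y)\times(X\times Y)\cong (X\times X)\times(Y\times Y)$ the diagonal $\Delta_{X\times Y}$ is exterior-homotopic to $\Delta_X\times\Delta_Y$. By Lemma \ref{einv-1} (applied to the exterior homeomorphism permuting factors) and Proposition \ref{producto},
$$
\etc(X\times Y)=\esecat(\Delta_X\times\Delta_Y)\le\esecat(\Delta_X)+\esecat(\Delta_Y)=\etc(X)+\etc(Y).
$$

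\medskip

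The main obstacle I anticipate is the left inequality in \emph{(iv)}, $\ecat_\alpha(X)\le\etc(X)$: unlike the purely formal manipulations elsewhere, it requires a concrete construction transporting an e-sectional cover of the diagonal to an open cover realizing $\ecat_\alpha$, and the delicate point is that the ray $\alpha$ and the structural map $r\colon X\to\br_+$ interact correctly with the externology — in particular that $(\id_X,\alpha\circ r)\colon X\to X\times X$ is exterior and that restricting exterior homotopies along it stays within $\E$. Once that single inequality is secured, items \emph{(iii)} and the full chain in \emph{(iv)} fall into place, and everything else is a direct citation of \S2.
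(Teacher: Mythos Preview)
Your overall architecture matches the paper's: items \emph{(i)}, \emph{(ii)}, \emph{(v)}, \emph{(vi)} are handled exactly as in the text, and for \emph{(iv)} the middle and right inequalities are cited identically. There is, however, one genuine error and a couple of places where your route diverges from the paper's.

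\medskip

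\textbf{The error in \emph{(iii)}.} Your claim that ``$\br_+$ is exterior contractible (the straight-line homotopy to $0$ is exterior on $\br_+$ with the cocompact externology)'' is false. The homotopy $H(t,s)=(1-s)t$ sends every point to $0$ at time $s=1$, so for the cocompact exterior set $E=(1,\infty)$ the preimage $H^{-1}(E)$ contains no set of the form $E'\times I$; hence $H$ is \emph{not} an exterior map on $\br_+\bar\times I$. In fact $\br_+$ is not exterior contractible to a point at all. What the paper uses instead is that the diagonal $\Delta\colon\br_+\to\br_+\times\br_+$ is an exterior homotopy \emph{equivalence} (equivalently $\br_+\times\br_+\simeq_e\br_+$), so $\esecat(\Delta_{\br_+})=\esecat(\id_{\br_+})=0$. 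Your conclusion is correct, but the justification must be replaced.

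\medskip

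\textbf{Differences in approach.} For the converse in \emph{(iii)} the paper does not appeal to \emph{(iv)}; it gives a direct argument: if $s\colon X\times X\to X^I$ is a global exterior section of $q$, then $\Phi=s\circ(\id_X,\alpha\circ r)\colon X\to X^I$ is an exterior homotopy between $\id_X$ and $\alpha\circ r$. Your forward reference to \emph{(iv)} is logically sound (no circularity), just less self-contained. For the left inequality in \emph{(iv)} the paper takes a slightly different route than your pullback along $(\id_X,\alpha r)$: it first shows $p\colon X\times\br_+\to X$ is an exterior homotopy equivalence, deduces $\ecat_\alpha(X)=\ecat_{(\alpha,\id_{\br_+})}(X\times\br_+)$, and then, given an e-sectional $U\subset X\times X$ with strict section $\sigma\colon U\to X^I$, sets $V=\{(x,t):(x,\alpha(t))\in U\}$ and $H((x,t),s)=(\sigma(x,\alpha(t))(s),t)$. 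The extra $\br_+$-coordinate makes the exterior verification transparent. Your direct approach can be made to work, but you correctly flag that checking exteriority of the restricted homotopy is the delicate step; the paper's detour through $X\times\br_+$ is precisely a device to make that step automatic. Finally, in \emph{(vi)} the permutation of factors is an exterior homeomorphism on the \emph{target}, so the relevant citation is Lemma~\ref{einv-2} (or Proposition~\ref{propo2}), not Lemma~\ref{einv-1}.
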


\begin{proof}  {\em (i)} This is immediate in view of (\ref{etcdiago}), Proposition \ref{propo2} and the exterior commutative square
$$\xymatrix{
X \ar[r]^{\simeq_{e} } \ar[d]_{\Delta_X} &
Y \ar[d]^{\Delta_Y}     \\
X\times X \ar[r]^{\simeq_{e} }&
Y\times Y }$$
produced by any given exterior homotopy equivalence $X\stackrel{\simeq_{e}}{\longrightarrow}Y$.

{\em (ii)} This is also trivial, again  in view of (\ref{etcdiago}) and Proposition \ref{propo1}, taking into account that $\tc(X)=\secat(\Delta_X)$.

{\em (iii)} If $X\simeq_e\br_+$ then $X\times X\simeq_e\br_+\times\br_+\simeq_e\br_+$ and thus, $\etc(X)=\esecat(\Delta)=\esecat(\id_{\br_+})=0$. Conversely, assume that $\etc(X)=0$ for  $X\in \exte_0$. This means that $q$ has an exterior section $s\colon X\times X\to X^I$. Let $\alpha\colon \br_+\to X$ be a ray of $X$ and $r\colon X\to\br_+$ be an exterior map. Then, the exterior map
$$
\Phi\colon X\longrightarrow X^I,\quad \Phi=s\circ(\id_X,\alpha\circ r),
$$
 is an exterior homotopy between $\id_X$ and $s\circ r$. On the other hand, one always have $r\circ s\simeq_e\id_{\br_+}$.

{\em (iv)}  In view of (\ref{etcdiago}), the second inequality of {\em(iv)} follows from Corollary \ref{coro1} while the third is a particular instance of Corollary \ref{catepro}.

For the first inequality, recall from \S1 that  in the diagram of exterior spaces and maps
$$\xymatrix{
 & {\mathbb{R}_+} \ar[dl]_{(\alpha ,\id_{\mathbb{R}_+})} \ar[dr]^{\alpha
 } & \\
 {X\times \mathbb{R}_+} \ar[rr]_{p}^{\simeq_e} & & {X}  }$$ the projection $p$ is an exterior homotopy equivalence.
Thus, $\ecat_{\alpha}(X)=\ecat_{(\alpha ,\id_{\mathbb{R}_+})}(X\times
\mathbb{R}_+)$. We finish by checking that
 $\ecat_{(\alpha
,\id_{\mathbb{R}_+})}(X\times \mathbb{R}_+)\leq \etc(X)$. For it, let
$U\subseteq X\times X$ be an open subset together with a strictly
commutative diagram in $\mathbf{E}$
$$\xymatrix{
{U} \ar@{^{(}->}@<-2pt>[rr] \ar[dr]_{\sigma } & & {X\times X} \\
 & {X^I} \ar[ur]_{q } }$$
and consider the open subset $V=\{(x,t)\in X\times \mathbb{R}_+,\,\,\bigl(x,\alpha (t)\bigr)\in U\}$. Then, the exterior map   $H\colon V\bar{\times
}I\rightarrow X\times \mathbb{R}_+$, $H\bigl((x,t),s\bigr)=\bigl(\sigma
\bigl(x,\alpha (t)\bigr)(s),t\bigr)$, is an exterior homotopy between the inclusion $V\hookrightarrow X\times \mathbb{R}_+$ and  the
composition $(\alpha ,\id_{\mathbb{R}_+})\circ \rho$ where $\rho$ is just the restriction to $V$ of the projection $X\times\br_+\to\br_+$. Applying this argument to an e-sectional cover of $X\times X$ finishes the proof.

{\em (v)} This is trivial in view of (\ref{etcdiago}) and Proposition \ref{cohomo}.

{\em (vi)} In view of (\ref{etcdiago}) this is just a particular case of Proposition \ref{producto}.
\end{proof}

The translation of the previous result to the proper setting provides:

\begin{theorem}\label{corotcp}
\begin{itemize}
\item[(i)] $\ptc$ is an invariant of the proper homotopy type.
\item[(ii)] $\tc(X)\le \ptc(X)$ for any  space $X$  and equality holds if $X$ is compact.
    \item[(iii)] If $X$ has the proper homotopy type of $\br_+$, then $\ptc(X)=0$. The converse also holds for any for any $X\in\pro_\infty$.
        \item[(iv)] For any  space $X\in\pro_\infty$ and any  ray $\alpha$ of $X$,
            $$
            \pcat_{\alpha}(X)\le \ptc(X)\le 2\,\pcat_\alpha(X).
            $$
            \item[(v)]  For any space $X$, and the exterior diagonal $\Delta_X\colon X_{cc}\to X_{cc}\times X_{cc}$, $\nil\ker H^*_{\mathcal E}(\Delta_X)\le \ptc(X)$.
\end{itemize}
\end{theorem}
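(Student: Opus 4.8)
The plan is to deduce Theorem \ref{corotcp} from Theorem \ref{propie} by systematically specializing every statement to the cocompact externology, using Proposition \ref{princi} (the identity $\ptc(X)=\etc(X_{cc})$) as the dictionary between the proper and exterior pictures. Concretely, for each item $(i)$--$(v)$ I would invoke the corresponding item of Theorem \ref{propie} applied to the exterior space $X_{cc}$, and then translate the exterior conclusion back into a proper statement using the full embedding $\PP\hookrightarrow\E$, $X\mapsto X_{cc}$, together with formula (\ref{catego}), $\pcat_\alpha(X)=\ecat_\alpha(X_{cc})$, the cylinder identity (\ref{cilindro}), and the fact that a proper homotopy equivalence $X\to Y$ induces an exterior homotopy equivalence $X_{cc}\to Y_{cc}$ (and conversely, since the embedding is full and faithful up to homotopy).

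In detail: for $(i)$, if $X$ and $Y$ are properly homotopy equivalent then $X_{cc}\simeq_e Y_{cc}$, so $\etc(X_{cc})=\etc(Y_{cc})$ by Theorem \ref{propie}$(i)$, whence $\ptc(X)=\ptc(Y)$ by Proposition \ref{princi}. For $(ii)$, Theorem \ref{propie}$(ii)$ gives $\tc(X_{cc}\text{ as a space})\le\etc(X_{cc})$, and since the underlying space of $X_{cc}$ is $X$ we get $\tc(X)\le\ptc(X)$; when $X$ is compact, $X_{cc}$ carries the trivial externology $\{X\}$ which in particular is the total externology on a compact space only if... — here one must be slightly careful: for $X$ compact the cocompact externology is $\{X\}$, the trivial externology, not the total one, so instead I would argue directly that when $X$ is compact the condition in Definition \ref{defmain2} is vacuous (take $L=X$, then $L^c=\emptyset$), so $\ptc(X)=\tc(X)$ reduces to the classical definition. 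For $(iii)$, combine Theorem \ref{propie}$(iii)$ with the fact that $X\simeq_p\br_+$ iff $X_{cc}\simeq_e(\br_+)_{cc}$, and note $X\in\PP_\infty$ translates to $X_{cc}\in\exte^{\br_+}_0$. For $(iv)$, apply Theorem \ref{propie}$(iv)$ to $X_{cc}$ and a ray $\alpha$, obtaining $\ecat_\alpha(X_{cc})\le\etc(X_{cc})\le 2\ecat_\alpha(X_{cc})$; then rewrite each term via (\ref{catego}) and Proposition \ref{princi} to get $\pcat_\alpha(X)\le\ptc(X)\le 2\pcat_\alpha(X)$ (the middle bound $\ecat_{(\alpha,\alpha)}(X\times X)$ of the exterior statement need not be translated, since $X_{cc}\times X_{cc}$ and $(X\times X)_{cc}$ differ, as noted in Remark \ref{remi}; one simply keeps the outer two inequalities). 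For $(v)$, this is verbatim Theorem \ref{propie}$(v)$ applied to $X_{cc}$, recorded through (\ref{ptcdiago}).

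The only genuinely delicate point I anticipate is the equality clause in $(ii)$: one cannot simply say ``$X_{cc}$ has the total externology'' because for compact $X$ the cocompact externology is trivial, not total, so Theorem \ref{propie}$(ii)$'s equality hypothesis does not literally apply. The clean fix is to observe that for compact $X$ every compact $K\subseteq X$ satisfies $K^c\subseteq X$ arbitrary and one may take $L=X$, making $L^c\times L^c=\emptyset$, so that the extra condition in Definition \ref{defmain2} imposes no constraint whatsoever on the local sections $\sigma_i$; hence a valid cover-and-sections datum for $\ptc(X)$ is exactly one for $\tc(X)$, giving $\ptc(X)=\tc(X)$. Everything else is a routine transcription, so I would present the proof compactly, item by item, citing Proposition \ref{princi}, Theorem \ref{propie}, and the formulas (\ref{catego}), (\ref{cilindro}), (\ref{ptcdiago}) as needed, and spelling out only the compact case of $(ii)$.
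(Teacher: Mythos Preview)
Your approach is essentially the paper's own: deduce each item from the corresponding item of Theorem \ref{propie} applied to $X_{cc}$, translating back via Proposition \ref{princi} and (\ref{catego}). The paper does exactly this for $(i)$, $(ii)$, $(iii)$, $(v)$ and the first inequality of $(iv)$, and for the second inequality of $(iv)$ it spells out the chain $\esecat(\Delta_{X_{cc}})\le\ecat_{(\alpha,\alpha)}(X_{cc}\times X_{cc})\le 2\ecat_\alpha(X_{cc})$ via Corollaries \ref{coro1} and \ref{catepro}, which is precisely what you get by quoting Theorem \ref{propie}$(iv)$ directly.

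There is one factual slip in your handling of $(ii)$. You claim that for compact $X$ the cocompact externology on $X$ is the trivial one $\{X\}$, and that therefore the equality clause of Theorem \ref{propie}$(ii)$ does not apply. This is backwards: if $X$ is compact (and Hausdorff, as ANR spaces are), then for \emph{every} open $U\subseteq X$ the complement $X\setminus U$ is closed in a compact space, hence compact, so $U$ lies in the cocompact externology. Thus $X_{cc}$ carries the \emph{total} externology, and Theorem \ref{propie}$(ii)$ gives $\tc(X)=\etc(X_{cc})=\ptc(X)$ directly, exactly as the paper asserts. Your alternative direct argument from Definition \ref{defmain2} (take $L=X$, so $L^c=\emptyset$ and the extra condition is vacuous) is perfectly valid and is in fact how the paper justifies the same claim right after Definition \ref{defmain}; but the ``delicate point'' you flag does not exist.
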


\begin{proof}
Items {\em (i)}, {\em (ii)}, {\em (iii)} and the first inequality of {\em (iv)}  are immediately deduced by restricting the corresponding items in Theorem \ref{propie} to the cocompact externology and applying Proposition \ref{princi}.

For the second inequality of {\em (iv)} recall from (\ref{ptcdiago}) that $\ptc(X)=\esecat(\Delta_{X_{cc}})$ which is smaller than or equal to $\ecat_{\Delta\alpha}(X_{cc}\times X_{cc})$ in view of Corollary \ref{coro1}. But $\ecat_{\Delta\circ\alpha}(X_{cc}\times X_{cc})= \ecat_{(\alpha,\alpha)}(X_{cc}\times X_{cc})$ which, by Corollary \ref{catepro}, is bounded by $2\ecat_{\alpha}(X_{cc})=2\pcat_\alpha(X)$.

Finally,  {\em (v)} also holds by the corresponding item in Theorem \ref{propie} taking into account formula (\ref{ptcdiago}).
\end{proof}

\begin{remark} Note that  the second inequality   in {\em (iv)} of Theorem \ref{propie} does not translate to the proper setting as $\ecat_{(\alpha,\alpha)}(X_{cc}\times X_{cc})$ is different in general from $\pcat_{(\alpha,\alpha)}(X\times X)$, see Remark \ref{remi}.  A similar obstruction prevents recovering  {\em (vi)} of Theorem \ref{propie} in the proper context.
\end{remark}

\begin{theorem}\label{ultimo}
Let  $X$ be a pointed compact space. Then:
\begin{itemize}
\item[(i)] $
\ptc(X\times\br_+)=\ptc(X\vee\br_+)=\tc(X)$.
\item[(ii)] $\ptc(X\times\br)=\ptc(X\vee\br)=\infty$.
\end{itemize}
\end{theorem}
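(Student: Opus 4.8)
The plan is to treat the two assertions of Theorem~\ref{ultimo} by reducing everything to exterior homotopy invariants via Proposition~\ref{princi} and the diagonal description \eqref{ptcdiago}, and then exploiting the decomposition of the cocompact externology of a product with a compact factor. The key structural observation, already used in the proof of the Proposition preceding this theorem, is that when $X$ is compact one has $(X\times\br_+)_{cc}=X_{tr}\times(\br_+)_{cc}$, where $X_{tr}$ carries the trivial externology $\mathcal E=\{X\}$; more generally for any locally compact $Z\in\PP_\infty$, $(X\times Z)_{cc}=X_{tr}\times Z_{cc}$. The same holds with $\br_+$ replaced by $\br$, or by a wedge, after identifying the relevant externologies.

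\medskip
\noindent\textbf{Proof of Theorem~\ref{ultimo}.}
\emph{(i)} By Proposition~\ref{princi} and \eqref{etcdiago}, $\ptc(X\times\br_+)=\etc\bigl((X\times\br_+)_{cc}\bigr)=\esecat\bigl(\Delta_{(X\times\br_+)_{cc}}\bigr)$. Since $X$ is compact, $(X\times\br_+)_{cc}=X_{tr}\times(\br_+)_{cc}$. Now $(\br_+)_{cc}\simeq_e \br_+$ is exterior contractible onto a ray, so by the product-projection argument in the proof of Theorem~\ref{propie}(iv) the projection $X_{tr}\times(\br_+)_{cc}\to X_{tr}$ admits an exterior section and is an exterior homotopy equivalence; here $X_{tr}$ is just $X$ with the trivial externology, for which every continuous map is exterior and every continuous homotopy is exterior (because the trivial cylinder externology consists of all opens containing $X\times I$, i.e.\ of $X\times I$ itself). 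Hence, by Proposition~\ref{propo2} applied to the square relating $\Delta_{X_{tr}\times(\br_+)_{cc}}$ and $\Delta_{X_{tr}}$, we get $\etc\bigl(X_{tr}\times(\br_+)_{cc}\bigr)=\etc(X_{tr})=\esecat(\Delta_{X_{tr}})$. Finally, for $X_{tr}$ the externology is trivial, so the exterior cylinder coincides with the ordinary cylinder and exterior homotopy is ordinary homotopy; therefore $\esecat(\Delta_{X_{tr}})=\secat(\Delta_X)=\tc(X)$ (one may also invoke Proposition~\ref{propo1} with the observation that on a compact space the trivial and the total externology agree). For the wedge $X\vee\br_+$ (with basepoint of $X$ glued to $0\in\br_+$), one checks that its cocompact externology is exterior homotopy equivalent to that of $X\times\br_+$: a compact subset of $X\vee\br_+$ has compact, hence eventually trivial, trace on the $\br_+$-arm, so the externology of $(X\vee\br_+)_{cc}$ is generated by complements of $[0,n]$-pieces, and the exterior deformation collapsing the $\br_+$-tail down to the wedge point (pushing everything past any given $[0,n]$) exhibits $(X\vee\br_+)_{cc}\simeq_e (X\times\br_+)_{cc}$ as above; alternatively, the inclusion $X\vee\br_+\hookrightarrow X\times\br_+$ and the retraction are mutually inverse proper homotopy equivalences, so the claim follows from Theorem~\ref{corotcp}(i).

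\medskip
\emph{(ii)} Again $(X\times\br)_{cc}=X_{tr}\times\br_{cc}$ and, arguing exactly as above, $\ptc(X\times\br)=\esecat\bigl(\Delta_{X_{tr}\times\br_{cc}}\bigr)$. By Theorem~\ref{corotcp}(iv) (first inequality) this is bounded below by $\pcat_\alpha(X\times\br)=\ecat_\alpha(X_{tr}\times\br_{cc})$ for any ray $\alpha$. Now $X_{tr}\times\br_{cc}\simeq_e \br_{cc}$ since $X_{tr}$ is exterior contractible (it has the trivial externology, so the constant homotopy onto a point is exterior), and $\br$ has two distinct strong ends; thus $\ecat_\alpha(X_{tr}\times\br_{cc})=\ecat_\alpha(\br_{cc})=\pcat_\alpha(\br)=\infty$ for every ray $\alpha$, by the computation recalled after \eqref{inecutri}. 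Hence $\ptc(X\times\br)=\infty$. The statement for $X\vee\br$ follows in the same way after identifying $(X\vee\br)_{cc}$, up to exterior homotopy, with $(X\times\br)_{cc}$ — equivalently, the inclusion $X\vee\br\hookrightarrow X\times\br$ is a proper homotopy equivalence (a strong deformation retract, compatibly with the ends), so Theorem~\ref{corotcp}(i) applies and $\ptc(X\vee\br)=\ptc(X\times\br)=\infty$.
\hfill$\square$

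\medskip
The step I expect to be the genuine obstacle is the identification of the cocompact externology of the \emph{wedge} $X\vee\br_+$ (resp.\ $X\vee\br$) with that of the product up to exterior homotopy equivalence: one must be careful that a compact subspace of $X\vee\br_+$ meets the ray-arm in a compact set and that the ``sliding to infinity'' deformation can be performed exteriorly, i.e.\ so that preimages of cocompact sets stay cocompact throughout the homotopy. The cleanest route, which I would adopt in the write-up, is to bypass externologies entirely at that point and simply note that the obvious inclusion of the wedge into the product is a proper homotopy equivalence (the product proper-deformation-retracts onto the wedge along the compact factor $X$), whence $\ptc$ agrees on the two by the proper-homotopy invariance established in Theorem~\ref{corotcp}(i); everything else is then the routine externology bookkeeping indicated above.
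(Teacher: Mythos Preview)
Your argument has genuine gaps in both parts.

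\textbf{Part (i).} The central step---that the projection $X_{tr}\times(\br_+)_{cc}\to X_{tr}$ is an exterior homotopy equivalence---is false. An exterior section would in particular yield an exterior map $X_{tr}\to(\br_+)_{cc}$, but for non-empty $X$ no such map exists: the preimage of each cocompact $[n,\infty)$ would have to equal the only exterior set $X$, forcing the image into $\bigcap_n[n,\infty)=\emptyset$. The argument you borrow from Theorem~\ref{propie}(iv) requires the base to lie in $\exte_0^{\br_+}$, and $X_{tr}$ does not. (Incidentally, the trivial and total externologies on a compact space do \emph{not} agree, so your parenthetical appeal to Proposition~\ref{propo1} is also off.) Your fallback for the wedge fails too: $X\vee\br_+$ and $X\times\br_+$ are in general \emph{not} proper homotopy equivalent---for $X=S^1$ the cocompact neighborhoods of infinity are rays in the first case and copies of $S^1\times[n,\infty)$ in the second, so the fundamental pro-group at the end distinguishes them; the ``cleanest route'' you single out in your closing paragraph is thus precisely the one that breaks. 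The paper proceeds differently: it constructs an exterior homotopy equivalence $(X\times X\times\br_+)_{cc}\simeq_e(X\times\br_+)_{cc}\times(X\times\br_+)_{cc}$ (both sides genuinely in $\exte_0^{\br_+}$), whence $\ptc(X\times\br_+)=\psecat(\Delta_X\times\id_{\br_+})$, and then bounds the latter by $\tc(X)$ by applying the homotopy-preserving functor $f\mapsto f\times\id_{\br_+}$ to a $\tc$-cover of $X\times X$.

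\textbf{Part (ii).} The claim that $X_{tr}$ is exterior contractible, and hence $X_{tr}\times\br_{cc}\simeq_e\br_{cc}$, is false unless $X$ happens to be topologically contractible: since the cylinder on $X_{tr}$ again carries the trivial externology, an exterior contracting homotopy is exactly an ordinary one on $X$. What \emph{does} hold, and what the paper uses, is that $\br_{cc}$ is an exterior \emph{retract} of $(X\times\br)_{cc}$ via $t\mapsto(x_0,t)$ and the projection; this gives $\ecat_\alpha(\br_{cc})\le\ecat_\beta\bigl((X\times\br)_{cc}\bigr)$ and, combined with $\pcat_\alpha(\br)=\infty$ and Theorem~\ref{propie}(iv), finishes the argument. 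The wedge $X\vee\br$ is treated by the same retract argument with $Z=X\vee\br$; as in part (i), $X\vee\br$ and $X\times\br$ are not proper homotopy equivalent in general, so your proposed shortcut there is not available.
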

\begin{proof}
{\em (i)} Clearly, $\tc(X)=\tc(X\times\br_+)\le \ptc(X\times\br_+)$.
For the other inequality we see first that
\begin{equation}\label{igual2}
\ptc(X\times\br_+)=\psecat(\Delta_X\times\id).
\end{equation}
To this end notice that the map
$$
\varphi\colon (X\times X\times\br_+)_{cc}\longrightarrow (X\times\br_+)_{cc}\times(X\times\br_+)_{cc}, \quad\varphi(x,y,\lambda)=(x,\lambda,y,\lambda),
$$
is an exterior homotopy equivalence having
$$
\psi\colon (X\times\br_+)_{cc}\times(X\times\br_+)_{cc}\longrightarrow (X\times X\times\br_+)_{cc}, \quad\psi(x,\lambda,y,\mu)=(x,y,\lambda),
$$
as homotopy inverse. Indeed, both are exterior maps and
 $\psi\circ\varphi=\id$. On the other hand, the map
 $$
 H\colon\bigl((X\times\br_+)_{cc}\times(X\times\br_+)_{cc}\bigr)\times I_{tr}\to (X\times\br_+)_{cc}\times(X\times\br_+)_{cc},
 $$
 defined as $ H(x,\lambda,y,\mu,t)=(x,\lambda,y,(1-t)\mu+t\lambda)$
 is an exterior homotopy between $\varphi\circ\psi$ and $\id$.

 Therefore, in view of the commutative diagram of exterior spaces

$$
\xymatrix{
&(X\times\br_+)_{cc}\ar[dl]_{\Delta_X\times\id}\ar[dr]^{\Delta_{X\times\br_+}}&\\
(X\times X\times\br_+)_{cc}\ar[rr]^{\varphi}_{\simeq_e}&&(X\times\br_+)_{cc}\times(X\times\br_+)_{cc}
}
$$
we deduce that
 $$
 \begin{aligned}
 \ptc(X\times\br_+)&=\etc(X\times\br_+)_{cc}=\esecat(\Delta_{X\times\br_+})\\&=
 \esecat(\Delta_{X}\times\id)=\psecat(\Delta_X\times\id)
 \end{aligned}
 $$

Assume now $\tc(X)=n$ and choose $\{V_i\}_{i=0}^n$ an open cover of $X\times X$ together with homotopy commutative diagrams
$$\xymatrix{
\overline V_i \ar@{^{(}->}@<-2pt>[rr]\ar[dr]_{s_i} & & {X\times X} \\
 & {X} \ar[ur]_{\Delta_X }}$$

Recall that for any continuous map $f\colon Y\to Z$ between compact spaces, the map $f\times\id_{\br_+}\colon Y\times\br_+\to Z\times\br_+$ is proper and this defines  a functor from the category of compact spaces to $\PP$ which preserves homotopies. Applying this functor to the above diagrams we get new ones in $\PP$ which are commutative up to proper homotopy
$$\xymatrix{
\overline V_i\times\br_+ \ar@{^{(}->}@<-2pt>[rr]\ar[dr]_{s_i\times\id} & & {X\times X\times\br_+} \\
 & {X\times\br_+} \ar[ur]_{\Delta_X \times\id }}$$
 Since $\overline V_i\times\br_+=\overline{V_i\times\br}_+$ we deduce that $\psecat(\Delta_X\times\id)\le n$, or equivalently in view of (\ref{igual2}), $\ptc(X\times\br_+)\le n$.

 An analogous argument proves the equality $\ptc(X\vee\br_+)=\tc(X)$, where $0$ is the chosen  base point in $\br_+$.

 {\em (ii)} We show first that $\ptc(X\times\br)=\infty$. By Proposition \ref{princi} this is equivalent to prove that $\etc\bigl((X\times\br)_{cc}\bigr)=\infty$. We first note the following: let $Y$ and $Z$ be exterior spaces with rays $\alpha$ and $\beta$ respectively such that $(Y,\alpha)$ is an exterior homotopy retract of $(Z,\beta)$, that is, there exists an exterior homotopy commutative diagram of the form
$$\xymatrix{
 & {\mathbb{R}_+} \ar[d]^{\beta } \ar[dl]_{\alpha } \ar[dr]^{\alpha} \\
 {Y} \ar[r]_{\lambda } & {Z} \ar[r]_{\mu } & {Y}
}$$ \noindent where $\mu \circ \lambda \simeq _e \id_{Y}$. Then,
\begin{equation}\label{ufff}
\ecat_{\alpha}(Y)\le \ecat_{\beta}(Z).
\end{equation}
Indeed, assume that there is a diagram like (\ref{uff}) with $r\colon U\to\br_+$ such that $\beta\circ r$ is exterior homotopic to the inclusion $U\subseteq Z$. Then, one easily checks that, for the exterior map $r\circ\lambda\colon \lambda^{-1}(U)\to \br_+$, the composition $\alpha \circ r\circ \lambda$ is homotopic to the inclusion   $\lambda^{-1}(U)\subseteq Y$, which proves (\ref{ufff}).

Consider the particular case in which: $Y=\br_{cc}$; $\alpha$ is the canonical inclusion; $Z=(X\times\br)_{cc}$; $\beta(t)=(x_0,t)$ for any fixed $x_0\in X$; $\lambda(t)=(x_0,t)$ and $\mu$ is the projection. Then, applying (\ref{ufff}) and  the first inequality of {\em (iv)} in Theorem \ref{propie},
$$
\etc\bigl((X\times\br)_{cc}\bigr)\ge \ecat_\beta\bigl((X\times\br)_{cc}\bigr)\ge\ecat_\alpha(\br_{cc}).
$$
However, see (\ref{catego}), $\ecat_\alpha(\br_{cc})=\pcat_{\alpha}(\br)$ which is known to be infinite.

For the equality $\ptc(X\vee\br)=\infty$ we may assume without loosing generality that $0$ is the  base point of $\br$. Then, follow the previous argument choosing $Z=X\vee \br$.
\end{proof}

\begin{theorem}\label{noncompact}
For any non-empty compact space $X$, and any $n\ge1$,
$$
 \ptc(X\times\br^n)\ge \tc(X\times S^{n-1}).
$$
\end{theorem}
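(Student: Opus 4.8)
The plan is to ``restrict to a large sphere at infinity'': a round sphere $\{|w|=R\}\subset\br^n$ meets every cocompact neighbourhood of infinity of $X\times\br^n$ once $R$ is large enough, so an e-sectional covering of the exterior diagonal of $(X\times\br^n)_{cc}$ should, after pulling back along the embedding $X\times S^{n-1}\hookrightarrow X\times\br^n$ given by $(x,u)\mapsto(x,Ru)$, yield an ordinary sectional covering of the diagonal of $X\times S^{n-1}$. To set this up, note that by Proposition~\ref{princi} and formula~(\ref{ptcdiago}) we may assume $k:=\ptc(X\times\br^n)=\esecat(\Delta)<\infty$ with $\Delta\colon(X\times\br^n)_{cc}\to(X\times\br^n)_{cc}\times(X\times\br^n)_{cc}$; fix an e-sectional covering $\{U_0,\dots,U_k\}$ of $(X\times\br^n)^2$ and, for each $i$, an exterior map $\sigma_i\colon U_i\to(X\times\br^n)_{cc}$ together with an exterior homotopy $F_i\colon U_i\bar\times I\to(X\times\br^n)_{cc}\times(X\times\br^n)_{cc}$ joining $(\sigma_i,\sigma_i)$ to the inclusion $U_i\hookrightarrow(X\times\br^n)^2$.

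Here is where compactness of $X$ enters. The open set $X\times(\br^n\setminus\{0\})$ is cocompact, since its complement $X\times\{0\}$ is compact, so $E:=\bigl(X\times(\br^n\setminus\{0\})\bigr)\times\bigl(X\times(\br^n\setminus\{0\})\bigr)$ is exterior in the product externology of $(X\times\br^n)_{cc}\times(X\times\br^n)_{cc}$. Unwinding, in turn, the definition of the product externology, of the relative externology it induces on $U_i$, and of the cylinder externology on $U_i\bar\times I$, exteriorness of $F_i$ provides compact subsets $K^i_1,K^i_2\subset X\times\br^n$ such that $F_i$ carries $\bigl[\bigl((X\times\br^n)\setminus K^i_1\bigr)\times\bigl((X\times\br^n)\setminus K^i_2\bigr)\cap U_i\bigr]\times I$ into $E$; in particular $\sigma_i$ maps that part of $U_i$ into $X\times(\br^n\setminus\{0\})$. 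Now choose $R$ strictly larger than $|w|$ for all $w$ in the projections to $\br^n$ of $K^i_1$ and $K^i_2$, $i=0,\dots,k$; these projections are compact, so such an $R$ exists.

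Set $\iota\colon X\times S^{n-1}\to X\times\br^n$, $\iota(x,u)=(x,Ru)$, and let $\rho\colon X\times(\br^n\setminus\{0\})\to X\times S^{n-1}$, $\rho(x,w)=(x,w/|w|)$, so that $\rho\circ\iota=\id$. Put $V_i:=(\iota\times\iota)^{-1}(U_i)$: these are open and, since $\{U_i\}$ covers $(X\times\br^n)^2$, they cover $(X\times S^{n-1})^2$. Because $R$ exceeds the bound just chosen, $\iota\times\iota$ maps each $V_i$ into the part of $U_i$ on which $F_i$ takes values in $E$, so $(\rho\times\rho)\circ F_i\circ(\iota\times\iota\times\id_I)\colon V_i\times I\to(X\times S^{n-1})^2$ is well defined; using $\rho\circ\iota=\id$ one checks it is a homotopy between $\Delta_{X\times S^{n-1}}\circ s_i$ and the inclusion $V_i\hookrightarrow(X\times S^{n-1})^2$, where $s_i:=\rho\circ\sigma_i\circ(\iota\times\iota)|_{V_i}$. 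Hence every $V_i$ is sectional for $\Delta_{X\times S^{n-1}}$, so $\{V_i\}_{i=0}^k$ is a sectional covering and $\tc(X\times S^{n-1})=\secat(\Delta_{X\times S^{n-1}})\le k=\ptc(X\times\br^n)$.

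The delicate point is the middle paragraph: extracting the compacts $K^i_j$ --- equivalently the radius $R$ past which the exterior homotopy data is ``radially tame'' --- from nothing more than the exteriorness of $F_i$, which forces the three-step descent through the product, relative and cylinder externologies and is precisely where compactness of $X$ is indispensable. (For $n=1$ the bound recovers $\ptc(X\times\br)=\infty$, consistently with Theorem~\ref{ultimo}, since $\tc(X\times S^{0})=\infty$ for any non-empty $X$.)
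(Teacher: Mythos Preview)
Your proof is correct and follows essentially the same route as the paper's: restrict the sectional data to a large sphere at infinity, using compactness of $X$ to ensure that past a fixed radius everything avoids $X\times\{0\}$, and then read off an ordinary sectional covering for $X\times S^{n-1}$. The only differences are cosmetic --- the paper works with strict local sections of the path fibration $q$ (via Definition~\ref{defmain2}) rather than homotopy sections of $\Delta$, restricts to the complement $X\times(\br^n\setminus B)$ of a large ball rather than to a single sphere $\{|w|=R\}$, and disposes of the case $n=1$ separately by invoking Theorem~\ref{ultimo}{\em(ii)}.
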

\begin{proof} For $n=1$ this is trivial in view of Theorem \ref{ultimo}{\em(ii)}. Fix $n\ge2$,
assume $\ptc(X\times\br^n)=m$ and let $\{U_i\}_{i=0}^m$ be an open covering of $(X\times\br^n)^2$ for which there exist local sections $\alpha_i\colon U_i\to X\times\br^n$, $i=0,\dots,m$,
of the  path fibration
$
q\colon ({X\times \br^n})^I\to (X\times \br^n)^2,
$
 satisfying the extra condition of Definition \ref{defmain2}. This requirement implies the existence of compact subspaces $\{K_i\}_{i=0}^m$ of $\br^n$ containing the compact $X\times\{0\}$ of $X\times\br^n$ such that, for all $i$,
$$
\im\alpha_i(x,y)\subseteq X\times (\br^n{\setminus}\{0\}),\quad\text{for all $x,y\in K_i^c$ with $(x,y)\in U_i$.}
$$
Choose $B$ a closed ball centered in $0\in \br^n$ so that  $K_i\subseteq X\times B$ for all $i$ and thus
$$
\im\alpha_i(x,y)\subseteq X\times (\br^n{\setminus}\{0\}),\quad\text{for all $x,y\in X\times B^c$ with $(x,y)\in U_i$.}
$$
That is, for all $i$,
$$
\alpha_i\colon U_i\cap\bigl((X\times B^c)\times (X\times B^c)\bigr)\to \bigl(X\times(\br^n{\setminus}\{0\})\bigr)^I
$$
Write $U'_i=U_i\cap\bigl((X\times B^c)\times (X\times B^c)\bigr)$ so that $\{U'_i\}_{i=0}^m$ is  an open covering of $X\times(\br^n{\setminus} B)$.

Consider
the homotopy retract
$
j\colon X\times (\br^n{\setminus} B)\stackrel{\simeq}{\hookrightarrow}  X \times(\br^n{\setminus} \{0\})
$
 which induces a homotopy equivalence
$$
j^I\colon  \bigl(X\times(\br^n{\setminus} B)\bigr)^I\stackrel{\simeq}{\longrightarrow} \bigl(X\times(\br^n{\setminus}\{0\})\bigr)^I
$$
fitting in the following commutative diagram
$$\xymatrix{
\bigl(X\times(\br^n{\setminus} B)\bigr)^I \ar[rr]^{j^I}_{\simeq} \ar[d]_q  & & {\bigl(X\times(\br^n{\setminus} 0)\bigr)^I} \ar[d]^{q} \\
{\bigl(X\times(\br^n{\setminus} B)\bigr)^2} \ar[rr]_{j\times j }^{\simeq} & & {\bigl(X\times(\br^n{\setminus} 0)\bigr)^2.} }$$
Denoting by $f$ the homotopy inverse of $j^I$  one easily checks that for each $i$ the map
$
f\circ\alpha_i\colon U'_i\to \bigl(X\times(\br^n{\setminus} B)\bigr)^I
$
is a homotopy local section of the path fibration in $\bigl(X\times(\br^n{\setminus} B)\bigr)^I$. Hence,
since $X\times(\br^n{\setminus} B)\simeq X\times S^{n-1}$, it follows that $\tc(X\times S^{n-1})\le m$.
\end{proof}

\section{Examples}

We now present a set of examples in which the proper topological complexity of some non compact spaces is explicitly computed.

\begin{proposition}\label{euclideo} Let $n\ge 1$ be and odd integer. Then,

$$
\ptc(\br^n)=\begin{cases}\infty,&\text{if $n=1$},\\
2,&\text{otherwise}.\end{cases}
$$
\end{proposition}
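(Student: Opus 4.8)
The plan is to treat the two cases $n=1$ and $n$ odd with $n\ge 3$ separately, using the results already assembled, chiefly Proposition \ref{princi}, formula (\ref{ptcdiago}), Theorem \ref{corotcp}, and the cohomological lower bound.

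For $n=1$: here $\br^1\in\pro_\infty$ and $\pro_\infty$ has the proper homotopy type of neither $\br_+$ nor anything simple — in fact $\br$ has two ends. I would invoke Theorem \ref{corotcp}(iv): $\ptc(\br)\ge\pcat_\alpha(\br)$ for any ray $\alpha$, and the excerpt already records $\pcat_\alpha(\br)=\infty$. Hence $\ptc(\br)=\infty$. (Alternatively this is the content of Theorem \ref{ultimo}(ii) with $X$ a point, but the direct appeal to Theorem \ref{corotcp}(iv) is cleaner.)

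For $n$ odd, $n\ge 3$: the upper bound $\ptc(\br^n)\le 2$ should come from Theorem \ref{corotcp}(iv), $\ptc(\br^n)\le 2\,\pcat_\alpha(\br^n)$, together with the fact recorded in the excerpt that $\br^n$ is strongly one-ended and $\pcat_\alpha(\br^n)=1$ for $n\ge 2$; so $\ptc(\br^n)\le 2$. For the matching lower bound $\ptc(\br^n)\ge 2$ I would use the cohomological obstruction, Theorem \ref{corotcp}(v): one needs a nonzero cup product of two classes in $H^*_{\mathcal E}(\br^n_{cc}\times\br^n_{cc})=H^*_c(\br^n\times\br^n)$ that restrict to zero along the exterior diagonal $\Delta\colon\br^n_{cc}\to\br^n_{cc}\times\br^n_{cc}$. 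Since $\br^n_{cc}\times\br^n_{cc}$ is (exterior-homotopy) modeled on $S^{n-1}\times S^{n-1}$ at infinity — more precisely compactly-supported cohomology of $\br^n$ is that of $S^n$ concentrated in degree $n$ — the Künneth-type computation gives $H^*_c(\br^n\times\br^n)\cong H^*_c(\br^n)\otimes H^*_c(\br^n)$ with a generator $u$ in degree $n$ on each factor, and the relevant zero-divisor is $\bar u = u\otimes 1 - 1\otimes u$, whose square is $-2\,(u\otimes u)$, nonzero since $n$ is odd (so $u\otimes u = -(u\otimes u)$ would force $2(u\otimes u)=0$ only in characteristic $2$; working over $\mathbb{Q}$ or $\mathbb{Z}$ it is nonzero). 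Thus $\nil\ker H^*_c(\Delta)\ge 2$, giving $\ptc(\br^n)\ge 2$ and hence equality.

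The main obstacle is the lower bound, and within it the careful identification of $H^*_{\mathcal E}$ of $\br^n_{cc}$ and of the product $\br^n_{cc}\times\br^n_{cc}$ (with the exterior, not the cocompact-of-the-product, externology — the two differ, as Remark \ref{remi} stresses), together with checking that $\bar u$ is indeed a zero-divisor, i.e. that $H^*_c(\Delta)(u\otimes 1)=H^*_c(\Delta)(1\otimes u)$ so that $H^*_c(\Delta)(\bar u)=0$; this last point is the standard fact that the diagonal pulls both external generators back to the same internal class, which here is $0$ in positive degree only if $H^*_c(\br^n)$ vanishes above degree $0$ in the range $<n$ — it does — but one must be slightly careful because $H^n_c(\br^n)\ne 0$: the point is that $\Delta^*(u\otimes 1)=u=\Delta^*(1\otimes u)$ in $H^n_c(\br^n)$, so their difference, not each individually, lies in the kernel, which is exactly what the $\nil\ker$ bound requires. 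I would spell this out and then conclude $\ptc(\br^n)=2$ for odd $n\ge 3$, completing the proposition. One should also double-check the parity hypothesis is genuinely used: for $n$ even $\bar u^2 = -2(u\otimes u)$ would still be nonzero over $\mathbb{Q}$, so in fact the cohomological bound gives $\ge 2$ for all $n\ge 2$; the oddness is presumably what makes the \emph{upper} bound sharp (mirroring the classical $\tc(S^n)=1$ for $n$ odd, $=2$ for $n$ even) — so I would revisit whether the true claim for even $n$ is $\ptc(\br^n)=3$ and the proposition is deliberately restricted to odd $n$ for that reason, and present the odd-$n$ upper bound via a direct two-set exterior sectional cover of $\Delta_{\br^n_{cc}}$ built from a nonvanishing vector field on $S^{n-1}$, rather than only through $2\,\pcat_\alpha$.
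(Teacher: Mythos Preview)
Your handling of $n=1$ and of the upper bound $\ptc(\br^n)\le 2$ for $n\ge 2$ is correct and coincides with the paper's argument. The gap lies in your plan for the lower bound when $n\ge 3$ is odd: the cohomological route via Theorem~\ref{corotcp}(v) cannot yield $\ptc(\br^n)\ge 2$.

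There are two independent problems. First, $H^*_c(\br^n)$ is concentrated in degree $n$ and carries no unit, so there are no classes of the form ``$u\otimes 1$'' or ``$1\otimes u$'' in $H^*_c(\br^n)\otimes H^*_c(\br^n)\cong H^*_c(\br^{2n})$, which is just the coefficient ring sitting in degree $2n$. You yourself flag that $H^*_{\mathcal E}(\br^n_{cc}\times\br^n_{cc})$ must be computed with the product externology rather than the cocompact one on $\br^{2n}$; doing so (with the cofinal exterior sets $E\times E$, $E=\br^n\setminus \bar B$) and using the long exact sequence of the pair with $\br^{2n}$ contractible gives
\[
H^*_{\mathcal E}(\br^n_{cc}\times\br^n_{cc})\ \cong\ \tilde H^{*-1}(S^{n-1}\times S^{n-1}),
\]
which has two generators in degree $n$, one in degree $2n-1$, and \emph{vanishes in degree $2n$}. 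Any cup-square of a degree-$n$ zero-divisor therefore lands in $0$, and the nilpotency bound gives only $\ptc(\br^n)\ge 1$. Second, the parity in your zero-divisor computation is reversed: for $|u|=n$ \emph{odd} one has $(u\otimes 1-1\otimes u)^2=0$; it is for even degree that the square equals $-2(u\otimes u)$. The oddness of $n$ enters the problem through the even-dimensional sphere $S^{n-1}$ at infinity, not through $H^n_c(\br^n)$. (For the same reason your closing suggestion of a nonvanishing vector field on $S^{n-1}$ is unavailable here: $n-1$ is even.)

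The paper's lower bound comes instead from Theorem~\ref{noncompact}: with $X=\{*\}$ it reads $\ptc(\br^n)\ge\tc(S^{n-1})$, and for odd $n>1$ the right-hand side is $2$ since $n-1$ is even. That theorem is proved by a direct geometric comparison---pushing the local sections outside a large ball and reinterpreting them on $\br^n\setminus\{0\}\simeq S^{n-1}$---rather than by exterior cohomology.
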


\begin{proof}
As recalled in \S1, and for any ray $\alpha$, $\pcat_\alpha(\br)=\infty$ and $\pcat_\alpha(\br^n)=1$ for any integer $n\ge2$. Hence, by {\em (iv)} of Theorem \ref{corotcp},
$
\ptc(\br)=\infty$ and
\begin{equation}\label{cotamain}
1\le\ptc(\br^n)\le2\quad\text{for $n\ge2$}.
\end{equation}
Given any odd integer $n>1$, choosing $X=\{*\}$ in Theorem \ref{noncompact} implies
$$
\ptc(\br^n)\ge\tc(S^{n-1})=2,
$$
which together with (\ref{cotamain}) provides equality.
\end{proof}

 \begin{remark}\label{remark2}
Odd dimensional Euclidean spaces provide examples for\break which $\psecat(\Delta_X)$ differs from $\ptc(X)$. Indeed,  by Corollary \ref{coro2}(iii),
$$
\psecat(\Delta_{\br^n})\le \pcat_{\Delta_{\br^n}\circ\alpha}( \br^{2n})=1
$$
for any $n\ge1$ any ray $\alpha$ of $\br^n$. But, using for instance compact supported cohomology, one easily sees that the proper map $\Delta_{\br^n}\colon \br^n\to \br^n\times \br^n$ does not admit a proper homotopy section. Thus,
 $$
 \psecat(\Delta_{\br^n})=1,
 $$
 while, $\ptc(\br^n)=2$ for any odd integer greater than 1.
\end{remark}

\begin{proposition} For any integers $n\ge 1$ and $m\ge 2$,
$$
2\le \ptc(S^n\times\br^m)\le 4.
$$
Moreover, if $n$ is even and $m\ge3$ is odd, the upper bound is attained,
$$
\ptc(S^n\times\br^m)= 4.
$$
\end{proposition}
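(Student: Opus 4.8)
The plan is to obtain the upper bound from the subadditivity of $\etc$ together with the decomposition of $(S^n\times\br^m)_{cc}$ permitted by the compactness of $S^n$, and to obtain both lower bounds from Theorem \ref{noncompact}, reducing them to standard computations of $\tc$ for products of spheres.

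For the upper bound, observe first that, $S^n$ being compact, the complements of compact subspaces of $\br^m\times S^n$ are exactly the open sets containing a set of the form $E\times S^n$ with $E$ cocompact in $\br^m$; hence $(S^n\times\br^m)_{cc}=(\br^m)_{cc}\times S^n_{tr}$, where $S^n_{tr}$ is $S^n$ equipped with the trivial externology $\{S^n\}$ (the same observation already used in \S2 for the bound $\pcat_{\alpha'}(X\times Y)\le\pcat_\alpha(X)+\cat(Y)$). For $S^n_{tr}$ all externologies entering the definition of $\etc$ — those of $S^n_{tr}\times S^n_{tr}$, of $(S^n)^I$ and of the associated exterior cylinders — are trivial, so exterior maps and exterior homotopies between these objects coincide with continuous maps and ordinary homotopies; therefore $\etc(S^n_{tr})=\tc(S^n)$. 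Combining Proposition \ref{princi} with Theorem \ref{propie}{\em(vi)} one then gets
$$
\ptc(S^n\times\br^m)=\etc\bigl((\br^m)_{cc}\times S^n_{tr}\bigr)\le\etc\bigl((\br^m)_{cc}\bigr)+\etc(S^n_{tr})=\ptc(\br^m)+\tc(S^n)\le 2+2=4,
$$
using that the bound (\ref{cotamain}) gives $\ptc(\br^m)\le2$ for every $m\ge2$ and that $\tc(S^n)\le2$.

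For the lower bounds, apply Theorem \ref{noncompact} to the non-empty compact space $S^n$ with exponent $m\ge2$, which yields $\ptc(S^n\times\br^m)\ge\tc(S^n\times S^{m-1})$. Since $n\ge1$ and $m-1\ge1$, the standard zero-divisor cup-length lower bound for $\tc$ already gives $\tc(S^n\times S^{m-1})\ge2$: over $\cu$, if $x$ and $y$ are generators of $H^n$ and $H^{m-1}$ of $S^n\times S^{m-1}$ then $xy\ne0$ (it is the top class), so the zero-divisors $\bar x=x\otimes1-1\otimes x$ and $\bar y=y\otimes1-1\otimes y$ satisfy $\bar x\bar y\ne0$ (the summand $xy\otimes1$ is not cancelled). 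Hence $\ptc(S^n\times\br^m)\ge2$. If in addition $n$ is even and $m\ge3$ is odd, then $m-1$ is even and $\ge2$; with rational coefficients $x^2=y^2=0$ for dimension reasons and the Koszul signs are trivial in even degree, so $\bar x^2=-2\,x\otimes x\ne0$ and $\bar y^2=-2\,y\otimes y\ne0$, whence $\bar x^2\bar y^2=4\,(xy\otimes xy)\ne0$. Thus the zero-divisor cup length of $S^n\times S^{m-1}$ is at least $4$, so $\tc(S^n\times S^{m-1})\ge4$ and $\ptc(S^n\times\br^m)\ge4$; with the upper bound this gives $\ptc(S^n\times\br^m)=4$.

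I expect the only step requiring genuine care to be the upper bound, namely the identification $(S^n\times\br^m)_{cc}=(\br^m)_{cc}\times S^n_{tr}$ and the reduction $\etc(S^n_{tr})=\tc(S^n)$; both are, however, straightforward consequences of the definitions once the compactness of $S^n$ is used, so I do not anticipate a real obstacle.
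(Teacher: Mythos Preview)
Your argument is correct, and both the upper and lower bounds follow different routes from the paper's.

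For the upper bound, the paper invokes the proper Ganea conjecture for Euclidean spaces (\cite[Theorems 25 and 28]{gargarmu2}) to obtain $\pcat_{(*,\alpha)}(S^n\times\br^m)=2$, and then applies Theorem~\ref{corotcp}{\em(iv)} to get $\ptc\le 2\cdot 2=4$. Your route, via the identification $(S^n\times\br^m)_{cc}=(\br^m)_{cc}\times S^n_{tr}$ and the subadditivity of $\etc$ from Theorem~\ref{propie}{\em(vi)}, is entirely internal to the paper and avoids the external Ganea-conjecture input; the reduction $\etc(S^n_{tr})=\tc(S^n)$ is indeed straightforward since all relevant externologies are trivial. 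For the general lower bound $\ptc\ge 2$, the paper again uses $\pcat_{(*,\alpha)}(S^n\times\br^m)=2$ together with Theorem~\ref{corotcp}{\em(iv)}, whereas you obtain it from Theorem~\ref{noncompact} and an elementary zero-divisor computation for $\tc(S^n\times S^{m-1})$; both are valid, and the paper's route has the side benefit of pinning down $\pcat$ exactly, while yours is more self-contained. For the sharp case $n$ even, $m$ odd, the two proofs coincide (Theorem~\ref{noncompact} plus $\tc(S^n\times S^{m-1})\ge 4$), with your version supplying the explicit cup-length check.
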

\begin{proof} By Theorems 25 and 28 of \cite{gargarmu2},   any Euclidean space of dimension at least 2 satisfies the {\em proper Ganea  conjecture}  so that, for any ray $\alpha$ of $\br^m$,
$$
\pcat_{(*,\alpha)}(S^n\times\br^m)=\pcat_\alpha(\br^m)+1=2.
$$
With this, the first assertion follows directly from  {\em(iv)} of Theorem \ref{corotcp}.

On the other hand, the second assertion is an immediate from Theorem  \ref{noncompact}:
$$
\ptc(S^n\times\br^m)\ge \tc(S^n\times S^{m-1})=4.
$$
\end{proof}
\begin{remark} The previous result highly contrast with the equality
$$\ptc(S^n\times \br)=\infty
$$
immediately deduced from Theorem \ref{ultimo}{\em(ii)}.
\end{remark}

Finally, recall that, for any $n\ge 1$, the {\em Brown sphere} which gives rise to the {\em Brown-Grossman homotopy groups} of a given exterior space is defined as
$$
S_B^n=\br_+\cup\bigl( \cup_{k=0}^\infty\, S^n\times\{k\}\bigr)/\bigl(k\sim (x_0,k)\bigr)
$$
endowed with the cocompact externology, in which $x_0$ is the chosen base point in the $n$-sphere.
\begin{figure}[H]
\centering
\includegraphics[height=18mm]{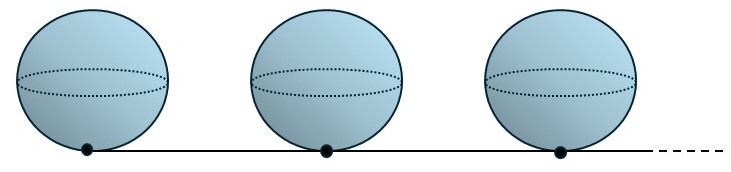}
\end{figure}

\begin{proposition} $\ptc(S^n_B)=2$ for any $n\ge 1$.
\end{proposition}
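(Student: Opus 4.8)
The plan is to compute $\ptc(S^n_B)$ by squeezing it between $2$ and $2$ using the bounds in Theorem \ref{corotcp}(iv), namely $\pcat_\alpha(S^n_B)\le \ptc(S^n_B)\le 2\,\pcat_\alpha(S^n_B)$ for a suitable ray $\alpha$, together with the cohomological lower bound of Theorem \ref{corotcp}(v). First I would establish the upper bound $\ptc(S^n_B)\le 2$: the Brown sphere deformation retracts properly onto neither a point nor $\br_+$, but it is strongly one-ended (its single end is the $\br_+$-spine), so there is essentially one ray $\alpha$ up to exterior homotopy. I claim $\pcat_\alpha(S^n_B)=1$: indeed $S^n_B$ is a countable wedge (along the spine) of spheres, and one can cover $S^n\times\{k\}$-type cells together with a neighbourhood of the spine by two open sets, each of which properly contracts onto the spine $\br_+$ along $\alpha$, using that each $S^n$ is contractible within $S^n\vee(\text{neighbourhood})$ to its base point while staying in a fixed cofinal family of exterior sets. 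This gives $\ptc(S^n_B)\le 2\cdot 1=2$.

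Next I would prove the lower bound $\ptc(S^n_B)\ge 2$ via Theorem \ref{corotcp}(v): it suffices to exhibit classes $u_1,u_2\in H^*_{\mathcal E}\bigl((S^n_B)_{cc}\times(S^n_B)_{cc}\bigr)$ with $\Delta^*(u_i)=0$ and $u_1\cup u_2\ne 0$. Here $H^*_{\mathcal E}$ with the cocompact externology is compactly supported cohomology (as noted after Corollary \ref{coro2}), and for the Brown sphere $H^*_c(S^n_B)\cong \prod_{k\ge1} H^*_c(\text{$k$-th sphere-with-tail})$, which in degree $n$ is an infinite product of copies of the coefficient ring. Pick a generator $a$ of one $H^n_c$-summand corresponding to a single sphere $S^n\times\{k_0\}$; then $a\times 1$ and $1\times a$ are in the kernel of $\Delta^*$ (their restriction to the diagonal vanishes because $a\cup a=0$ in $H^{2n}_c(S^n_B)$ — or more precisely because $a$ restricts to zero under the two projections composed with $\Delta$ only after the standard zero-divisor argument), and their product $a\times a$ is nonzero in $H^{2n}_c$ since it is detected on the $(k_0,k_0)$-component $S^n\times S^n$-like piece. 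This is the classical $\tc(S^n)\ge 2$ argument run in compactly supported cohomology, and the key point is that compactly supported cohomology of $S^n_B$ still sees an honest $n$-dimensional class on each sphere-summand and the Künneth-type product survives to the external square.

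The main obstacle I expect is the careful bookkeeping of the externology: one must check that the open sets used for the $\pcat_\alpha$-cover are genuinely exterior (cocompact), i.e. that they contain complements of compact sets, which forces the cover of each sphere $S^n\times\{k\}$ to be chosen \emph{uniformly} in $k$ so that the union over all large $k$ is still the complement of a compact set. Concretely, one needs the two contracting homotopies $H_0,H_1$ on the two cover pieces to be ``eventually radial along the spine'', i.e. to fix a cofinal family $E\times I$ of exterior cylinders; this is where the proof could get technical, but it is manageable because the Brown sphere is built so uniformly. A secondary subtlety is verifying that the identification $H^*_c(S^n_B)$ contains the desired class and that the cup product $a\times a$ does not vanish upon passing to the inverse limit defining $H^*_{\mathcal E}$ — here one uses that the relevant compact exhaustion of $S^n_B\times S^n_B$ restricts compatibly to each $S^n\times S^n$ sphere-block, so the product is detected at a finite stage. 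Once these two technical points are in place, combining $\ptc(S^n_B)\le 2$ with $\ptc(S^n_B)\ge \nil\ker H^*_{\mathcal E}(\Delta)\ge 2$ yields $\ptc(S^n_B)=2$.
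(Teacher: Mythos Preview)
Your upper bound is exactly the paper's: $\pcat_\alpha(S^n_B)=1$ together with Theorem~\ref{corotcp}{\em(iv)} gives $\ptc(S^n_B)\le 2$.

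Your lower bound, however, has a genuine gap. The classes $a\times 1$ and $1\times a$ are \emph{not} in $\ker\Delta^*$: one has $\Delta^*(a\times 1)=a\cup 1=a\ne 0$. The standard zero-divisor is $\bar a=a\times 1-1\times a$, and to get nilpotency $\ge 2$ you would need $\bar a^{\,2}\ne 0$. But for $|a|=n$ odd,
\[
(a\times 1-1\times a)^2=-a\times a-(-1)^n a\times a=0,
\]
so a single sphere class cannot give the bound when $n$ is odd; this is exactly why $\tc(S^n)=1$, not $2$, for odd $n$ (your parenthetical ``classical $\tc(S^n)\ge 2$ argument'' is mistaken in that parity). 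To salvage the cohomological route you would need \emph{two} distinct sphere classes $a,b$ (from two different $S^n\times\{k\}$) and check that $(a\times 1-1\times a)(b\times 1-1\times b)=-a\times b-(-1)^n b\times a$ is nonzero in $H^*_{\mathcal E}\bigl((S^n_B)_{cc}\times(S^n_B)_{cc}\bigr)$. That is plausible but requires handling the externology on the product carefully, since $H^*_{\mathcal E}$ of $X_{cc}\times X_{cc}$ is \emph{not} literally $H^*_c(X\times X)$ (the product externology is coarser than the cocompact one; cf.\ Remark~\ref{remi}).

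The paper sidesteps all of this by using the much simpler Theorem~\ref{corotcp}{\em(ii)}, $\tc(X)\le \ptc(X)$, together with the ordinary homotopy type: $S^n_B\simeq \bigvee_{k\ge 1}S^n$ retracts onto $S^n\vee S^n$, and known results (\cite{far2},~\cite{dra2}) give $\tc(S^n\vee S^n)\ge \cat(S^n\times S^n)=2$. No exterior cohomology is needed for the lower bound.
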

\begin{proof}
Note that $S^n_B$ has the homotopy type of an infinite wedge of $n$-spheres which can be retracted to $S^n\vee S^n$. Thus, by {\em (ii)} of Theorem \ref{corotcp}, Lemma 4.25 of \cite{far2} and Theorem 3.6 of \cite{dra2},
$$
\begin{aligned}
\ptc(S_B^n)&\ge\tc(\vee_{k=1}^\infty S^n)\ge\tc(S^n\vee S^n)\\&\ge \max\{\tc(S^n),\cat{S^n\times S^n}\}=2.
\end{aligned}
$$
On the other hand, for any ray $\alpha$, $\pcat_\alpha(S^n_B)=1$ so {\em (iv)} of Theorem \ref{corotcp} implies that $\ptc(S^n_B)\le 2$.
\end{proof}

A similar argument provides:

\begin{proposition}\label{sacabo} Let $S$ be a one-ended non-compact connected surface not homeomorphic to $\br^2$. Then:
$$
2\le \ptc(S)\le 4.
$$
\end{proposition}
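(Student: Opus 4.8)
The plan is to follow the same strategy used for the Brown sphere in the preceding proposition, replacing the infinite wedge by whatever polyhedral model captures the homotopy type of $S$. First I would recall that a one-ended non-compact connected surface not homeomorphic to $\br^2$ is, after a proper deformation retraction onto its $1$-skeleton (or onto a ``spine''), of the homotopy type of a wedge of circles; since $S$ is not $\br^2$ its fundamental group is non-trivial, and in the non-compact case it is in fact a free group of rank $\ge 1$. So $S\simeq\bigvee_{i} S^1$ with at least one factor; moreover, being non-compact and not simply connected forces the rank to be at least $2$ (a one-ended surface with $\pi_1=\bz$ would be an open annulus or Möbius band, but the open annulus is $S^1\times\br$ which is two-ended, and the open Möbius band is $\br^2$-like only up to homotopy — here one must be slightly careful, but in any event $S$ retracts onto a wedge of at least two circles, hence onto $S^1\vee S^1$).

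The upper bound comes from Theorem \ref{corotcp}{\em(iv)}: since $S$ is one-ended, for any ray $\alpha$ we have $\pcat_\alpha(S)=\pcat(S)$, and a non-compact surface is (homotopy equivalent to) a $1$-complex, hence $\cat(S)\le 1$, so $\pcat_\alpha(S)=1$ unless $S$ is contractible; as $S\not\simeq\br^2$ it is not contractible, giving $\pcat_\alpha(S)=1$ and therefore $\ptc(S)\le 2\pcat_\alpha(S)=2$. Wait — this already forces $\ptc(S)=2$ and contradicts the stated range $2\le\ptc(S)\le 4$. I need to re-examine: the one-endedness giving $\pcat_\alpha=\pcat$ is the point that must fail, i.e. $\pcat_\alpha(S)$ may exceed $\pcat(S)$; but the proposition is stated for \emph{one-ended} $S$ precisely so that one expects $\pcat_\alpha(S)$ to behave well. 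The resolution is that I should \emph{not} claim $\pcat_\alpha(S)=1$ in general; instead the honest upper bound is $\ptc(S)\le 2\pcat_\alpha(S)$ and the best general bound on $\pcat_\alpha$ of a one-ended open surface that is safe to invoke is $\pcat_\alpha(S)\le 2$ (coming from $\cat(S)\le 1$ together with the end contributing at most one more, exactly as in \eqref{inecutri} and the examples of \S1 where $\pcat_\alpha(\br^n)=1$ but $\pcat_\alpha$ can jump). So the clean route to the upper bound is: $\ptc(S)\le 2\pcat_\alpha(S)\le 4$.

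For the lower bound I would use Theorem \ref{corotcp}{\em(ii)}, $\tc(S)\le\ptc(S)$, together with the standard fact (Farber's Lemma 4.25 of \cite{far2} and the lower bound of Dranishnikov, Theorem 3.6 of \cite{dra2}) that for a wedge onto which $S$ retracts we have $\tc(S)\ge\tc(S^1\vee S^1)\ge\max\{\tc(S^1),\cat(S^1\times S^1)\}=\max\{1,2\}=2$. Combining, $2\le\ptc(S)\le 4$. The main obstacle, and the step requiring the most care, is the classification input: one must justify (i) that a one-ended non-compact connected surface $\ne\br^2$ retracts, up to homotopy, onto a wedge of at least two circles, using the classification of non-compact surfaces, and (ii) that $\pcat_\alpha(S)\le 2$ for such $S$, which I would deduce from $\cat(S)\le 1$ (since $S$ has the homotopy type of a graph) and the general principle that adding the control at the single end costs at most one unit — this is exactly the kind of estimate behind the computations $\pcat_\alpha(\br^n)=1$ recalled in \S1, so I would phrase it as: a one-ended open surface is, up to proper homotopy, either $\br^2$ or dominated by a half-open ``infinite genus / infinite boundary'' model whose $\alpha$-proper category is $2$, and cite the relevant proper-category computation. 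Everything else is a direct substitution into the already-established Theorems \ref{corotcp} and \ref{noncompact}.
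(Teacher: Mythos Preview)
Your overall strategy is the paper's: lower bound via $\tc(S)\le\ptc(S)$ and $S\simeq$ a wedge of circles, upper bound via $\ptc(S)\le 2\,\pcat_\alpha(S)$. But two steps are not in order.

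\medskip

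\textbf{Upper bound.} Your chain ``$\cat(S)\le 1\Rightarrow\pcat_\alpha(S)\le 1$'' is invalid: proper L--S category is not bounded above by ordinary category (this is exactly why $\pcat_\alpha(\br^n)=1$ while $\cat(\br^n)=0$, and why $\pcat_\alpha(\br)=\infty$). The fallback heuristic ``the single end costs at most one extra unit'' is not a theorem either; it is not something one can extract from \S1 or from \eqref{inecutri}. The paper does not attempt any such derivation: it simply quotes \cite[Corollary~3.3]{adomarquin} together with \cite[Remark~3.3]{carlasmuquin}, which compute directly that a one-ended open surface admits a ray $\alpha$ with $\pcat_\alpha(S)=2$, and then applies Theorem~\ref{corotcp}\emph{(iv)}. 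Without those citations there is no argument for $\pcat_\alpha(S)\le 2$.

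\medskip

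\textbf{Lower bound.} Your worry about the rank-one case is legitimate, but your resolution is wrong. The open M\"obius band is a one-ended non-compact connected surface not homeomorphic to $\br^2$, it has $\pi_1\cong\bz$, and it is homotopy equivalent to a \emph{single} $S^1$; it does not retract onto $S^1\vee S^1$. So the Dranishnikov bound gives only $\tc(S)=\tc(S^1)=1$ there, and the $\tc$-route yields merely $\ptc(S)\ge 1$. The paper's ``same argument as in the preceding result'' glosses over this, but once one has $\pcat_\alpha(S)=2$ from the citations above, the first inequality in Theorem~\ref{corotcp}\emph{(iv)} already gives $\ptc(S)\ge\pcat_\alpha(S)=2$, covering this case as well. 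In short, the cited equality $\pcat_\alpha(S)=2$ is doing the real work on both sides of the inequality.
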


\begin{proof}
In general, see for instance \cite{nara}, any non-compact $n$-manifold is of the homotopy type of an $(n-1)$-complex. In particular a non-compact, non-contractible surface $S$ has the homotopy type of a wedge of at least two circles. Hence, the same argument as in the preceding result yields $2\le \ptc (S)$.

On the other hand, by \cite[Corollary 3.3]{adomarquin} together with \cite[Remark 3.3]{carlasmuquin}, there exists a ray $\alpha$ in $S$ for which $\pcat_\alpha(S)=2$. Hence $\ptc(S)\le 4$.
\end{proof}
\begin{remark}
{\em (i)} Here, non-compact surfaces are considered in their most general sense. See \cite[Theorem 3]{ri} for their classification. The complement of a Cantor set in a closed surface is an example.

{\em (ii)} Note also that the previous result contrast with the equality
$$\ptc(\br^2{\setminus}\{*\})=\infty
$$
deduced from Theorem \ref{ultimo}{\em(ii)} in view of the homeomorphism $\br^2{\setminus}\{*\}\cong S^1\times\br$. Note that this open manifold has two ends so it does not fit in the hypotheses of Proposition \ref{sacabo}
\end{remark}

\section{A final application}\label{finale}

We conclude with a framework which can be directly applied to robotics problems: Let $Y$ represent the configuration space of a given motion planning problem, subject to the condition that any path representing the motion between two points in a specific subspace $Z\subseteq Y$ must remain entirely within $Z$.

This gives rise to the following notion, not to be confused with the relative versions of the topological complexity in \cite[Definition 4.20]{far2} or \cite[Definition 2.5]{short}:

\begin{definition}\label{rtc} Let $(Y, Z)$ be a pair of spaces. The \emph{relative topological complexity} of $(Y, Z)$, denoted by $\tc(Y, Z)$, is the smallest integer $n$ (or $\infty$ if no such integer exists) such that $Y \times Y$ can be covered by $n+1$ open sets $\{U_0, \dots, U_n\}$, with continuous local sections $\sigma_i \colon U_i \to Y^I$ of the path fibration satisfying $\sigma_i(z_1, z_2) \in Z^I$ whenever $z_1, z_2 \in Z$.
\end{definition}

Note that $\tc(Y) = \tc(Y, \emptyset)$.

In many cases, this notion is closely related to the proper topological complexity. Namely:

\begin{proposition} Let $Y$ be a finite cell complex, and let $Z\subseteq Y$ be a subcomplex such that the inclusion $i\colon Y{\setminus} Z\stackrel{\simeq}{\hookrightarrow} Y$ is a homotopy equivalence. Then,
 $$
 \ptc(Y{\setminus}Z)=\tc(Y,Z).
 $$
 \end{proposition}
Observe that the above result applies, for instance, to the case where Y is a compact manifold with boundary and $Z$ is a subcomplex of the boundary.

 \begin{proof}
We may assume that $Z$ has a small neighborhood $W$ in $Y$ such that the inclusion $W \setminus Z \stackrel{\simeq}{\hookrightarrow} W$ is a homotopy equivalence. To construct this, begin by selecting a small neighborhood $W'$ of $Z$ in $Y$. Then, define $Y' = Y$, set $Z' = \partial W'=\overline{W'}\setminus W' $, and equip them with finite cell complex structures such that $Z'$ is a subcomplex of $Y'$. Observe that $Z'$ now has a small neighborhood satisfying the desired property, and $Y' \setminus Z'$ is homeomorphic to $Y \setminus Z$. Note that this property is already satisfied if $Y$ is a compact manifold and $Z$ is a subcomplex of the boundary.

In particular, the existence of $W$ as described above allows us to choose a homotopy inverse $r\colon Y\stackrel{\simeq}{\to} Y\sm Z$ and homotopies $F\colon \id_{Y\sm Z}\simeq r\circ i$ and  $G\colon \id_Y\simeq i\circ r$, which preserve the proximity of points near $Z$.

Next, let  $s\colon U\to  Y^I$  be a section of the path fibration such that the image of the path $s(z_1,z_2)$ is contained in $Z$ whenever $z_1,z_2\in Z$. We define $V=U\cap(Y\setminus Z\times Y\setminus Z)=(i\times i)^{-1}(U)$ and set the section $\sigma\colon V\to (Y\sm Z)^I$ of the path fibration as follows: for any $x,y\in V$,
$$
 \sigma(x,y)=F(x,-)\cdot \bigl(r\circ s(x,y)\bigr)\cdot F(y,-)^{-1}.
 $$
In other words, $\sigma(x,y)$ is the product of three paths that join consecutively $x$ with $r(x)$, $r(x)$ with $r(y)$ and $r(y)$ with $y$. Then, in view of the above choices of $r$ and $F$, and considering that any point of  $\im s(x,y)$ is near $Z$ whenever $x$ and $y$ are, any compact $K$ in $Y\setminus Z$ is contained in another compact $L$ for which $\im\sigma(x,y)$ does not intersect $K$ whenever $x,y\notin L$. This implies, in particular, that $\ptc(Y\setminus Z)\le \tc(Y,Z)$.

Conversely, let $\sigma\colon V\to (Y\sm Z)^I$ be a section of the path fibration such that any compact $K$ of $Y\sm Z$ is contained in another compact $L$ such that $\im\sigma(x,y)$ does not intersect $K$ whenever $x,y\notin L$. Set $U=(r\times r)^{-1}(V)$ and define the section $s\colon U\to Y^I$ of the path fibration by
$$
 s(x,y)= G(x,-)\cdot \sigma\bigl(r(x),r(y)\bigr)\cdot G(y,-)^{-1}.
$$
By the hypothesis on $\sigma$ and the properties of $r$ and $G$, any point of $\im s(x,y)$ is near $Z$ whenever $x$ and $y$ are. In particular, $\im s(z_1,z_2)\subseteq Z$ if $z_1,z_2\in Z$.
 \end{proof}

 \begin{remark}  Observe that in the proposition above we may consider $Z$ and $Y\setminus Z$ as the unique safe and unsafe regions, respectively, in the motion planning. Furthermore, this result indicates that, in all cases it encompasses, which are likely to be the relevant ones in applications, the choice of the compact unsafe region is not essential.

Based on the above, the concept of relative topological complexity merits further development and study, as it may prove to be the appropriate framework for implementing precise constraints in a given motion planning problem.

 \end{remark}

\bigskip\bigskip\bigskip

\noindent {\sc Departamento de Matem\'aticas, Estad{\'\i}stica e Investigaci\'on Ope-\break\noindent rativa, Universidad de La Laguna, Avenida Astrof{\'\i}sico Francisco S\'anchez S/N,
38200 La Laguna, Spain}.

\noindent\texttt{jmgarcal@ull.edu.es}

\bigskip

\noindent{\sc Departamento de \'Algebra, Geometr\'{\i}a y Topolog\'{\i}a, Universidad de M\'alaga, Bulevar Louis Pasteur 31, 29010 M\'alaga, Spain.}

\noindent
\texttt{aniceto@uma.es}


\begin{thebibliography}{99}

\bibitem{adomarquin} {R. Ayala, E. Dom\'{\i}nguez, A. M\'arquez and A.
Quintero}, \textit{Lusternik-Schnirelmann invariants in proper homotopy
theory}, Pacific Journal of Math. 153 (1992), 201--215.

\bibitem{blaszca} Z. Blaszczyk and J. Carrasquel-Vera, \textit{Topological complexity and efficiency of motion planning algorithms}, Rev. Mat. Iberoam. 34 (2018), 1679--1684.

\bibitem{carlasmuquin} {M. C\'ardenas, F. F. Lasheras, F. Muro and A. Quintero}, \textit{Proper L-S category, fundamental
pro-groups and 2-dimensional proper co-H-spaces}, Topology Appl. 153 (2005), 580--604.

\bibitem{carlasquin} {M. C\'ardenas, F. F. Lasheras and A. Quintero},
\textit{Minimal covers of open manifolds with half-spaces and the proper
L-S category of product spaces}, Bull. Belgian Math. Soc.
9 (2002), 419--431.

\bibitem{colgrant} {H. Colman and M. Grant}.
\textit{Equivariant topological complexity}, Algebr. Geom.
Topol. 12 (2012), 2299--2316

\bibitem{C-L-O-T} O. Cornea, G. Lupton, J. Oprea and D. Tanr\'e,
\textit{Lusternik-Schnirelmann category}, Math. Surveys and
Monographs, vol. 103, AMS, 2003.

\bibitem{dra} {A. Dranishnikov},
\textit{On topological complexity of twisted products}, Topology
Appl. 179 (2015), 74--80.

\bibitem{dra2} {A. Dranishnikov},
\textit{Topological complexity of wedges and covering maps}, Proc. Amer. Math. Soc.  142 (2014), 4365--4376.

\bibitem{far} M. Farber,
\textit{Topological complexity of motion planning}, Discret. Comput.
Geom. 29 (2003), 211--221.

\bibitem{far2} M. Farber,
\textit{Invitation to topological robotics}, Zurich Lectures in Advanced Mathematics, European Mathematical Society,  2008.


\bibitem{fargrant}M. Farber and M. Grant, \textit{Symmetric motion planning}, in \textit{Topology and Robotics},
Contemp. Math. 438 (2007), 85--104.



\bibitem{gargarmu1} {J.M. Garc{\'\i}a-Calcines, P. R. Garc{\'\i}a-D{\'\i}az and A. Murillo}.
\textit{A Whitehead-Ganea approach for proper
Lusternik-Schnirelmann category}, Math. Proc. Camb. Phil. Soc.
142 (2007), 439--457.

\bibitem{cal} {J. M. Garc\'{\i}a-Calcines},  \textit{A remark on proper partitions of unity}, Top. Appl.  159 (2012), 3363--3371.


\bibitem{gargarmu2} {J. M. Garc\'{\i}a-Calcines, P. R. Garc\'{\i}a-D\'{\i}az and A. Murillo},
\textit{The Ganea conjecture in proper homotopy via exterior
homotopy theory},  Math. Proc. Cambridge Phil. Soc. 149 (2010) 75--91.



\bibitem{gargarmu4} {J.M. Garc\'{\i}a-Calcines, P. R. Garc\'{\i}a-D\'{\i}az and A. Murillo},
\textit{Brown representability for exterior cohomology and cohomology
with compact supports}, Journal of the London Math. Soc. 90 (2014), 184--196.


\bibitem{calpiher} J. M. Garc\'{\i}a-Calcines, M. Garc\'{\i}a-Pinillos and L. J. Hern\'andez-Paricio, {\em  A closed
model category for proper homotopy and shape theories}, Bull. Austral. Math. Soc. 57 (1998),
221--242.


\bibitem{lumar} W. Lubawski and W. Marzantowicz, {\em Invariant topological complexity}, Bull. London Math. Soc. 47 (2014), 101--117.

  \bibitem{mi}  M. Mihalik, \textit{ Semistability at the end of a group extension}, Trans. Amer. Math.
Soc. 277 (1983), 307-321.

    \bibitem{nara} T. Napier and  M. Ramachandran, {\em Elementary construction of exhausting subsolutions of elliptic operators},
    Enseignement mathematique  3  (2004), 367--390.

   \bibitem{ri} I. Richards, {\em  On the classification of noncompact surfaces},
    Trans. Amer. Math. Soc. 106 (1963), 259--269.

\bibitem{short} {Robert Short},  \textit{Relative topological complexity of a pair}, Top. Appl.  248 (2018), 7--23.

\bibitem{spa} E. H. Spanier, {\em Cohomology with supports}, Pacific Journal of Math. 123 (1986), 447--464.









\end{thebibliography}
\end{document}